\numberwithin{equation}{section}
\newtheorem{theorem}[equation]{Theorem}
\newtheorem{lemma}[equation]{Lemma}
\newtheorem{proposition}[equation]{Proposition}
\theoremstyle{definition}
\newtheorem{example}[equation]{Example}
\newtheorem*{remark}{Remark}
\def\Q{\mathbb{Q}}
\def\Z{\mathbb{Z}}
\providecommand{\norm}[1]{\lVert#1\rVert}
\title{On Hensel's roots and a factorization formula in $\Z[[x]]$}
\author{Daniel Birmajer}
\address{Department of Mathematics\\ Nazareth College\\ 4245 East Ave.\\ Rochester, NY 14618}
\author{Juan B. Gil}
\address{Penn State Altoona\\ 3000 Ivyside Park\\ Altoona, PA 16601}
\author{Michael D. Weiner}
\keywords{$p$-adic roots, Hensel's lemma, factorization of formal power series, partial Bell polynomials}
\begin{document}
\maketitle

\begin{abstract}
Given an odd prime $p$, we provide formulas for the Hensel lifts of polynomial roots modulo $p$, and give an explicit factorization over the ring of formal power series with integer coefficients for certain reducible polynomials whose constant term is of the form $p^w$ with $w>1$. All of our formulas are given in terms of partial Bell polynomials and rely on the inversion formula of Lagrange.
\end{abstract}

\section{Introduction}\label{sec:introduction}

The divisibility theory of commutative rings is a fundamental and persisting topic in mathematics that entails two main aspects: determining irreducibility and finding a factorization of the reducible elements in the ring. Prominent examples are the ring of integers $\Z$ and the ring of polynomials $\Z[x]$. It is then natural to investigate the arithmetic properties of $\Z[[x]]$, the ring of formal power series with integer coefficients. While polynomials in $\Z[x]$ can be seen as  power series over the integers, the factorization properties over $\Z[x]$ and over $\Z[[x]]$ are in general unrelated; cf. \cite{BiGi}. In \cite{BGW2012a}, the authors studied this factorization problem exhaustively. In particular, for a class of polynomials parametrized by a prime $p$, a connection between reducibility in $\Z[[x]]$ and the existence of a $p$-adic root with positive valuation was established. Whereas this connection can be certainly explained in structural terms, the role of the root in the factorization process is not obvious.

Motivated by this factorization problem and the need to find explicit $p$-adic roots, the main goal of this paper is to provide formulas for the Hensel lifts of roots modulo $p$, and to give a factorization in $\Z[[x]]$ of certain reducible polynomials whose constant term is of the form $p^w$ with $p$ prime and $w>1$. All of our formulas are given in terms of partial Bell polynomials and rely on the inversion formula of Lagrange.

On the one hand, in Section~\ref{sec:Hensel}, we prove two versions of Hensel's lemma that give explicit formulas for the roots of any polynomial in $\Z_p[x]$, the ring of polynomials over the $p$-adic integers $\Z_p$. For illustration purposes, we examine the special cases of quadratic and cubic polynomials, see \eqref{quadraticRoot} and \eqref{cubicRoot}, and discuss the roots of unity leading to a formula for the so-called Teichm\"uller lifts, see Proposition~\ref{roots_unity}. On the other hand, we give a factorization over $\Z[[x]]$ for polynomials $f$ (of degree higher than 1) with $f(0)=p^w$ that are reducible in the presence of a $p$-adic root in $p\Z_p$. Although Theorem~\ref{FactorizationThm} is formulated for polynomials, it actually holds verbatim for power series. An illustrative example is discussed at the end of Section~\ref{sec:Factorization}. 

As mentioned before, sections~\ref{sec:Hensel} and \ref{sec:Factorization} are related and rely on the material discussed in Section~\ref{sec:Inversion}. For the reader's convenience, a short appendix with some of the basic properties and identities for the partial Bell polynomials is included. We finish by observing that most of the results presented here may be applied to polynomials and power series over other commutative rings.

\section{Series solutions of algebraic equations}\label{sec:Inversion}

The main results of this paper rely on the following consequence of the inversion formula of Lagrange for formal power series. For a detailed proof and other applications, we refer the reader to \cite[Section 3.8]{Comtet} or \cite[Section 11.6]{Charalambides}. In the sequel, $B_{n,j}(x_1,x_2,\dots)$ denotes the $(n,j)$-th partial Bell polynomial, see the appendix.

\begin{lemma}[cf. {\cite[Corollary 11.3]{Charalambides}}]
\label{FormalInversion}
If $\phi(t)$ is a power series of the form
\begin{equation*}
\phi(t) = t\bigg(1+\sum_{r=1}^\infty \alpha_r \frac{t^r}{r!}\bigg),
\end{equation*}
then its formal inverse is given by
\begin{equation*}
\phi^{-1}(u) = u\bigg(1+\sum_{n=1}^\infty \beta_n \frac{u^n}{n!}\bigg),
\end{equation*}
where
\begin{equation*}
 \beta_n = \sum_{j=1}^n (-1)^j \frac{(n+j)!}{(n+1)!}B_{n,j}(\alpha_1,\alpha_2,\dots).
\end{equation*}
\end{lemma}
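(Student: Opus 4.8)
The plan is to derive the formula directly from the Lagrange--Bürmann inversion formula (the version cited before the statement) together with the exponential generating function identity for the partial Bell polynomials recalled in the appendix. Write $\phi(t) = t\,g(t)$ with $g(t) = 1 + h(t)$ and $h(t) = \sum_{r\ge 1}\alpha_r t^r/r!$, so that $g(0)=1$; then $\phi^{-1}$ is a well-defined formal power series with $\phi^{-1}(0)=0$ and $(\phi^{-1})'(0)=1$. Since $w=\phi(t)$ is the same as $t = w/g(t) = w\,F(t)$ with $F = 1/g$ and $F(0)=1\neq 0$, Lagrange inversion gives
\[
[u^{m}]\,\phi^{-1}(u) = \frac{1}{m}\,[t^{m-1}]\,F(t)^m = \frac{1}{m}\,[t^{m-1}]\,g(t)^{-m}, \qquad m\ge 1.
\]
Comparing with $\phi^{-1}(u) = u + \sum_{n\ge1}\beta_n u^{n+1}/n!$ and taking $m = n+1$ yields $\beta_n = \dfrac{n!}{n+1}\,[t^{n}]\,g(t)^{-(n+1)}$, so the whole problem reduces to computing the coefficient of $t^n$ in $g(t)^{-(n+1)}$.

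For that I would expand, using the binomial series,
\[
g(t)^{-(n+1)} = \bigl(1+h(t)\bigr)^{-(n+1)} = \sum_{j\ge 0}\binom{-(n+1)}{j}\,h(t)^j,
\]
and then invoke the defining generating-function property of the partial Bell polynomials,
\[
\frac{h(t)^j}{j!} = \sum_{n\ge j} B_{n,j}(\alpha_1,\alpha_2,\dots)\,\frac{t^n}{n!},
\]
so that $[t^n]\,h(t)^j = \frac{j!}{n!}\,B_{n,j}(\alpha_1,\alpha_2,\dots)$, which in particular vanishes for $j>n$. Since the $j=0$ term contributes only to $[t^0]$, for $n\ge 1$ this gives
\[
[t^n]\,g(t)^{-(n+1)} = \sum_{j=1}^n \binom{-(n+1)}{j}\,\frac{j!}{n!}\,B_{n,j}(\alpha_1,\alpha_2,\dots).
\]

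Finally I would simplify the binomial coefficient via $\binom{-(n+1)}{j} = (-1)^j\binom{n+j}{j} = (-1)^j\frac{(n+j)!}{n!\,j!}$, hence $\binom{-(n+1)}{j}\frac{j!}{n!} = (-1)^j\frac{(n+j)!}{(n!)^2}$, and substitute into the expression for $\beta_n$:
\[
\beta_n = \frac{n!}{n+1}\sum_{j=1}^n (-1)^j\frac{(n+j)!}{(n!)^2}\,B_{n,j}(\alpha_1,\alpha_2,\dots) = \sum_{j=1}^n (-1)^j\frac{(n+j)!}{(n+1)!}\,B_{n,j}(\alpha_1,\alpha_2,\dots),
\]
which is exactly the claimed formula. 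The only genuinely delicate points are bookkeeping: applying Lagrange inversion in the correct direction (with $F=1/g$, not $g$), and keeping the index shift between $\phi^{-1}(u)=u(1+\sum\beta_n u^n/n!)$ and the coefficient extraction $[u^{m}]$ consistent; the Bell-polynomial rewriting and the binomial simplification are then routine. One could instead substitute the ansatz for $\phi^{-1}$ into $\phi(\phi^{-1}(u))=u$ and match coefficients, but that reproduces the same computation less transparently, so I would keep the argument above.
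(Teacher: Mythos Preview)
Your proof is correct. The paper does not actually give its own proof of this lemma; it states the result with a citation and refers the reader to \cite[Section~3.8]{Comtet} and \cite[Section~11.6]{Charalambides} for a detailed argument. Your derivation---applying the Lagrange inversion formula in the form $[u^m]\phi^{-1}(u)=\tfrac{1}{m}[t^{m-1}](t/\phi(t))^m$, expanding $(1+h)^{-(n+1)}$ binomially, and identifying the coefficients of $h(t)^j$ via the exponential generating function for partial Bell polynomials---is exactly the standard proof one finds in those references, and all the bookkeeping (index shift $m=n+1$, the sign in $\binom{-(n+1)}{j}$, the simplification to $(n+j)!/(n+1)!$) is handled correctly.
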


Inversion formulas of this type have been studied by many authors in the search for solutions of algebraic equations. For instance, a series solution for the equation $x^m+px=q$ was already given by Lambert in 1758, cf. \cite{Lambert}. The most general formulas we found in the literature were obtained by Birkeland around 1927.  In \cite{Birk27}, the author studied arbitrary polynomial equations and obtained explicit solutions in terms of hypergeometric functions, see also \cite{PaTsi}. 

It turns out that, if $f(x)$ is a power series over a commutative ring $\mathcal{R}$ with an invertible linear coefficient, then formal series solutions for the equation $f(x)=0$ can be obtained from Lemma~\ref{FormalInversion} as follows. 

\begin{proposition}\label{RootThm}
Given a power series $f(x) = a_0 + a_1x + a_2 x^2 + \cdots\in \mathcal{R}[[x]]$ with $a_1$ invertible in $\mathcal{R}$, the equation $f(x)=0$ has the formal root
\begin{equation}\label{formalRoot}
 x = \sum_{n=0}^\infty \sum_{k=0}^n \frac{(-1)^{n-k+1}}{a_1^k\,(n+1)!} \binom{2n+1}{n-k} B_{n+k,k}(1! a_1, 2!a_2,\dots) \Big(\frac{a_0}{a_1}\Big)^{n+1}.
\end{equation}
\end{proposition}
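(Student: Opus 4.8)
The plan is to reduce the equation $f(x)=0$ to an inversion problem to which Lemma~\ref{FormalInversion} applies directly, and then to massage the resulting coefficients into the stated closed form using the exponential generating function for partial Bell polynomials together with a hockey-stick summation.

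First, since $a_1$ is invertible, I would rewrite $f(x)=0$ as $\phi(x)=-a_0/a_1$, where
\[
\phi(t) := \frac{f(t)-a_0}{a_1} = t\Bigl(1+\sum_{r=1}^\infty \alpha_r\frac{t^r}{r!}\Bigr),\qquad \alpha_r:=\frac{r!\,a_{r+1}}{a_1}.
\]
Lemma~\ref{FormalInversion} then yields $x=\phi^{-1}(-a_0/a_1)$; expanding $\phi^{-1}(u)=u\bigl(1+\sum_{n\ge1}\beta_n u^n/n!\bigr)$ at $u=-a_0/a_1$ gives
\[
x=\sum_{n=0}^\infty \frac{(-1)^{n+1}}{n!}\,\beta_n\Bigl(\frac{a_0}{a_1}\Bigr)^{n+1},
\]
with $\beta_0:=1$ and $\beta_n=\sum_{j=1}^n(-1)^j\frac{(n+j)!}{(n+1)!}B_{n,j}(\alpha_1,\alpha_2,\dots)$ for $n\ge1$. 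It then suffices to prove that $\beta_n=\frac{1}{n+1}\sum_{k=0}^n\frac{(-1)^k}{a_1^k}\binom{2n+1}{n-k}B_{n+k,k}(1!a_1,2!a_2,\dots)$, which after absorbing the sign $(-1)^{n+1}(-1)^k=(-1)^{n-k+1}$ and the factor $\frac1{n!(n+1)}=\frac1{(n+1)!}$ is exactly \eqref{formalRoot}.

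The heart of the matter is a change-of-argument identity for Bell polynomials. Using $\sum_{n\ge j}B_{n,j}(x_1,x_2,\dots)t^n/n!=\frac1{j!}\bigl(\sum_{m\ge1}x_m t^m/m!\bigr)^j$, one checks that $\sum_{m\ge1}\alpha_m t^m/m! = g(t)/(a_1t)-1$, where $g(t)=\sum_{m\ge1}a_m t^m$ and $\sum_{n\ge k}B_{n,k}(1!a_1,2!a_2,\dots)t^n/n! = g(t)^k/k!$. Expanding $\bigl(g(t)/(a_1t)-1\bigr)^j$ by the binomial theorem and comparing coefficients of $t^n$ gives
\[
B_{n,j}(\alpha_1,\alpha_2,\dots)=n!\sum_{k=0}^j\frac{(-1)^{j-k}}{(j-k)!\,a_1^k\,(n+k)!}\,B_{n+k,k}(1!a_1,2!a_2,\dots).
\]
Substituting this into $\beta_n$, interchanging the $j$- and $k$-summations (using $(-1)^j(-1)^{j-k}=(-1)^k$), the inner sum over $j$ becomes $\sum_{i=0}^{n-k}\binom{n+k+i}{i}$, which by the hockey-stick identity equals $\binom{2n+1}{n-k}$; this gives the desired formula for $\beta_n$, and hence \eqref{formalRoot}.

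I expect the main obstacle to be organizational rather than conceptual: keeping the two nested sums, the sign $(-1)^j(-1)^{j-k}$, and the several factorial ratios straight, and spotting that the $j$-sum collapses via hockey-stick (equivalently a Vandermonde convolution). One should also note that the whole computation is carried out formally — e.g. in $\mathcal{R}[[a_0]]$ with $a_0$ viewed as an indeterminate, or, as in the $p$-adic applications of Section~\ref{sec:Hensel}, in a setting where the series converges — so that the phrase ``formal root'' is to be understood accordingly.
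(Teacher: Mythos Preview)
Your proof is correct and follows the same architecture as the paper's: reduce $f(x)=0$ to $\phi(x)=-a_0/a_1$, apply Lemma~\ref{FormalInversion}, rewrite $B_{n,j}(\alpha_1,\alpha_2,\dots)$ as an alternating sum of $B_{n+k,k}(1!a_1,2!a_2,\dots)$, and collapse the resulting $j$-sum via the hockey-stick identity $\sum_{j=k}^n\binom{n+j}{n+k}=\binom{2n+1}{n-k}$. The only difference is that you obtain the intermediate identity $B_{n,j}(\alpha_1,\dots)=n!\sum_{k=0}^j\frac{(-1)^{j-k}}{(j-k)!\,a_1^k\,(n+k)!}B_{n+k,k}(1!a_1,\dots)$ directly from the exponential generating function and the binomial theorem, whereas the paper derives the equivalent relation by chaining the appendix identities \eqref{Comt3l'}, \eqref{Comt3n}, \eqref{Comt3n'}; this is a cosmetic variation, not a different route.
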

\begin{proof}
Let 
\begin{equation*}
\phi(x) = x\bigg(1+\sum_{\ell=1}^\infty \alpha_\ell \frac{x^\ell}{\ell!}\bigg) \text{ with } \alpha_\ell=\ell!\, a_{\ell+1}/a_1.
\end{equation*}
Thus $f(x)=a_1\big(a_0/a_1+\phi(x)\big)$ and $f(x)=0$ if $\phi(x)=-a_0/a_1$. By Lemma~\ref{FormalInversion}, this equation has the formal root
\begin{align} \notag
x&= \phi^{-1}(-a_0/a_1) = -\frac{a_0}{a_1}\bigg(1+\sum_{n=1}^\infty \frac{(-1)^n\beta_n}{n!} \Big(\frac{a_0}{a_1}\Big)^n \bigg) \\ \notag
 &= -\frac{a_0}{a_1}\bigg(1+\sum_{n=1}^\infty \frac{(-1)^{n}}{n!} \bigg[\sum_{j=1}^n  (-1)^j\frac{(n+j)!}{(n+1)!}B_{n,j}(\alpha_1,\alpha_2,\dots)\bigg] \Big(\frac{a_0}{a_1}\Big)^n \bigg) \\
\label{altForm} 
 &= \sum_{n=0}^\infty \bigg[\sum_{j=0}^n \frac{(-1)^{n+j+1}}{a_1^j\, n!}  \frac{(n+j)!}{(n+1)!}B_{n,j}(1!a_2, 2!a_3,\dots)\bigg] \Big(\frac{a_0}{a_1}\Big)^{n+1}.
\end{align}
Now, if we set $x_j=j! a_j$, then
\begin{equation*}
 B_{n,j}(1! a_2, 2!a_3,\dots) 
 = B_{n,j}(\tfrac{x_2}{2} \tfrac{x_3}{3},\dots) = \frac{n!}{(n+j)!} B_{n+j,j}(0,x_2,x_3,\dots)
\end{equation*}
by \eqref{Comt3l'}. Moreover, by means of \eqref{Comt3n} and \eqref{Comt3n'}, we have  
\begin{align*}
 B_{n+j,j}(0,x_2,x_3,\dots) &= \sum_{\substack{k\le j \\ \nu\le n+j}} 
\binom{n+j}{\nu}B_{\nu,k}(x_1,x_2,\dots) B_{n+j-\nu,j-k}(-x_1,0,\dots) \\
 &= \sum_{k\le j} \binom{n+j}{n+k} (-x_1)^{j-k}B_{n+k,k}(x_1,x_2,\dots).
\end{align*}
Therefore,
\begin{align*} 
\sum_{j=0}^n \frac{(-1)^{n+j+1}}{a_1^j\,n!} & \frac{(n+j)!}{(n+1)!} B_{n,j}(1!a_2, 2!a_3,\dots) \\ 
&= \sum_{j=0}^n \frac{(-1)^{n+j+1}}{a_1^j\, (n+1)!} B_{n+j,j}(0,2!a_2,3!a_3,\dots) \\ 
&= \sum_{j=0}^n\sum_{k\le j} \frac{(-1)^{n-k+1}}{a_1^k\, (n+1)!} \binom{n+j}{n+k}
B_{n+k,k}(1!a_1,2!a_2,\dots) \\[-3ex] \intertext{}
&= \sum_{k=0}^n \frac{(-1)^{n-k+1}}{a_1^k\, (n+1)!} \bigg[\sum_{j=k}^n \binom{n+j}{n+k}\bigg] 
B_{n+k,k}(1!a_1,2!a_2,\dots) \\ 
&= \sum_{k=0}^n \frac{(-1)^{n-k+1}}{a_1^k\, (n+1)!} \binom{2n+1}{n-k} B_{n+k,k}(1!a_1,2!a_2,\dots). 
\end{align*}
Inserting this expression into \eqref{altForm}, we arrive at \eqref{formalRoot}.
\end{proof}

The simplicity (or complexity) of formula \eqref{formalRoot} clearly depends on the structure of the partial Bell polynomials. 

For example, for $x^m+px-q=0$ with $p,q\in\mathbb{R}$, $p\not=0$, $m>1$, the root \eqref{formalRoot} takes the form
\begin{equation*}
 x = \sum_{n=0}^\infty \sum_{k=0}^n \frac{(-1)^{n-k+1}}{p^k\,(n+1)!} \binom{2n+1}{n-k} 
 B_{n+k,k}(p,0,\dots,0,m!,0,\dots) \Big(\frac{-q}{p}\Big)^{n+1},
\end{equation*}
which by means of \eqref{Comt3n} and \eqref{Comt3n'} reduces to
\begin{equation}\label{LambertRoot}
 x = \sum_{k=0}^\infty \frac{(-1)^{k}}{p^{k}} \binom{mk}{k} \frac{1}{(m-1)k+1} 
 \Big(\frac{q}{p}\Big)^{(m-1)k+1}.
\end{equation} 
This formula includes Eisenstein's series solution for $x^5+x=q$, cf. \cite{Stillwell},
\begin{equation*}
 x = \sum_{k=0}^\infty (-1)^{k} \binom{5k}{k} \frac{1}{4k+1} q^{4k+1}.
\end{equation*}

Of course, this series does not converge for all values of $q$, so further analysis is required to understand and possibly make sense of \eqref{formalRoot}. While convergence in general is not the focus of this paper, we want to briefly discuss $f(x)=x^3+x-q$ in order to illustrate a possible analytic approach. For this polynomial, the sum \eqref{LambertRoot} becomes 
\begin{equation*}
 x = \sum_{k=0}^\infty (-1)^{k} \binom{3k}{k} \frac{1}{2k+1} q^{2k+1}
 = q\sum_{k=0}^\infty \binom{3k}{k} \frac{1}{2k+1} (-q^2)^{k},
\end{equation*}
which converges only when $q^2\leq 4/27$. However, 
\begin{equation*}
 \sum_{k=0}^\infty \binom{3k}{k} \frac{1}{2k+1} (-q^2)^{k}
 = {}_{{}_2}F_{{}_1}\big(\tfrac13, \tfrac23; \tfrac32; -\tfrac{27}{4} q^2\big),
\end{equation*}
and since the hypergeometric function ${}_{{}_2}F_{{}_1}\big(\tfrac13, \tfrac23; \tfrac32; z\big)$ extends analytically to $\mathbb{C}\setminus (1,\infty)$, we can actually evaluate the formal root for larger values of $q$. For example, if $q=2$, then the root of $x^3+x-2=0$ provided by \eqref{formalRoot} is precisely $x=2\cdot {}_{{}_2}F_{{}_1}\big(\tfrac13, \tfrac23; \tfrac32; -27\big)=1$. 

\bigskip
In the next section we will fully discuss the use of \eqref{formalRoot} to find roots of polynomials over the $p$-adic integers. 

\section{Hensel's roots}\label{sec:Hensel}

In this section, we use Proposition~\ref{RootThm} to give a version of Hensel's lemma that provides an explicit formula for the $p$-adic root of a polynomial in $\Z_p[x]$. We start by recalling some basic facts about the $p$-adic numbers. For a comprehensive treatment of this subject, the reader is referred to \cite{Katok,Koblitz,Serre}.
\par
Let $p$ be a prime integer. For any nonzero integer $a$, let $v_p(a)$ (the $p$-adic valuation of $a$) be the highest power of $p$ which divides $a$, i.e., the greatest $m$ such that $a \equiv 0 \pmod {p^m}$; we agree to write $v_p(0) = \infty$.  Note that  $v_p\,(a_1\,a_2) = v_p(a_1) + v_p(a_2)$ for all $a_1, a_2\in \Z$. For any rational number $x = a/b$, define $v_p(x)=v_p(a) - v_p(b)$. Note that this expression depends only on $x$ and not on its representation as a ratio of integers. 

The $p$-adic norm in $\Q$ is defined  as $\norm{x}_p=p^{-v_p(x)}$ if $x\not=0$, and $\norm{0}_p=0$. This norm is non-Archimedean, that is  $\norm{x + y}_p \le  \max(\norm{x}_p, \norm{y}_p)$. The $p$-adic completion of $\Q$ with respect to $\norm{\cdot}_p$ is denoted by $\Q_p$. Every $a\in \Q_p$  admits a unique $p$-adic expansion,
\begin{equation*}
a = \frac{a_0}{p^m}+\frac{a_1}{p^{m-1}}+\dotsb +\frac{a_{m-1}}{p}+a_m + a_{m+1}\,p + a_{m+2}\,p^2+\dotsb,  
\end{equation*}
with $0\le a_i<p$ for all $i$.
\par
We let $\Z_p = \{a \in  \Q_p\,\vert\, \norm{a}_p \le 1\}$, the set of all numbers in $\Q_p$ whose $p$-adic expansion involves no negative powers of $p$. An element of $\Z_p$ is called a \emph{p-adic integer}, and the set of $p$-adic integers is  a subring of the field $\Q_p$. If $x\in \Z_p$ is such that $v_p(x)=0$, then $x$ is a unit and its multiplicative inverse $1/x$ is in $\Z_p$.
\par
A fundamental property of the $p$-adic numbers is that a series in $\Q_p$ converges if and only if its terms approach zero. This condition is equivalent to verifying that the $p$-adic valuation of the terms tend to infinity.

\begin{theorem}\label{basicHensel}
Let $p>2$ be prime and let $f(x) = a_0 + a_1x + \cdots + a_m x^m$ be a polynomial of degree $m$ in $\Z_p[x]$. If $r_0\in\Z$ is such that
\begin{equation*}
 f(r_0)\equiv 0\!\! \pmod p \;\text{ and }\; v_p(f'(r_0))=0,
\end{equation*}
then $r_0$ lifts to a $p$-adic root $r$ of $f$ given by
\begin{equation}\label{HenselRoot}
 r = r_0 + \sum_{n=0}^\infty \bigg[\sum_{k=0}^n \frac{(-1)^{n-k+1}}{c_1^k\,(n+1)!} \binom{2n+1}{n-k} 
 B_{n+k,k}(1!c_1, 2!c_2,\dots)\bigg] \Big(\frac{c_0}{c_1}\Big)^{n+1},
\end{equation}
where $c_j=\frac{f^{(j)}(r_0)}{j!}$ for $j=0,1,\dots,m$. Note that $v_p(c_1)=0$ implies $1/c_1\in \Z_p$.
\end{theorem}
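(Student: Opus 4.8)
The plan is to translate the classical Hensel lifting into the language of Proposition~\ref{RootThm} by shifting coordinates. First I would write $x = r_0 + y$ and expand $f$ around $r_0$ via Taylor's formula (which is exact for polynomials and valid over any commutative ring of characteristic zero), obtaining
\begin{equation*}
 f(r_0 + y) = \sum_{j=0}^m c_j\, y^j, \qquad c_j = \frac{f^{(j)}(r_0)}{j!}.
\end{equation*}
Thus finding a root $r = r_0 + y$ of $f$ is the same as finding a root $y$ of $g(y) := \sum_{j=0}^m c_j y^j \in \Z_p[[y]]$. The hypotheses say exactly that $c_0 = f(r_0) \equiv 0 \pmod p$, i.e.\ $v_p(c_0) \geq 1$, and $v_p(c_1) = v_p(f'(r_0)) = 0$, so $c_1$ is a unit in $\Z_p$ and $1/c_1 \in \Z_p$. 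Proposition~\ref{RootThm}, applied with $\mathcal{R} = \Z_p$ and the coefficients $a_j = c_j$, then produces the formal series root displayed in \eqref{HenselRoot}; since $g$ is a polynomial, the inner sums over $k$ are finite, so \eqref{HenselRoot} is a genuine power series in $c_0/c_1$ with coefficients in $\Z_p$.

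The one thing Proposition~\ref{RootThm} does \emph{not} give for free is convergence in $\Z_p$, and that is where the real work lies. I would argue as follows. Set $t := c_0/c_1$, so $v_p(t) = v_p(c_0) \geq 1$. The $n$-th term of \eqref{HenselRoot} is $t^{n+1}$ times a coefficient lying in $\Z_p$ (each $B_{n+k,k}(1!c_1,2!c_2,\dots)$ is a polynomial with integer coefficients in the $c_j$'s, and the only denominators, $1/(c_1^k (n+1)!)$, are cancelled: $c_1$ is a unit, and one checks that the binomial-times-Bell expression is $p$-integral — this is the kind of integrality already implicit in the manipulations of Proposition~\ref{RootThm}). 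Granting $p$-integrality of the bracketed coefficients, the $n$-th term has valuation at least $(n+1)v_p(t) \geq n+1 \to \infty$, so the series converges in $\Z_p$ by the stated completeness criterion (a series converges iff its terms tend to $0$, equivalently their valuations tend to $\infty$). Hence $r \in \Z_p$ is well defined, and substituting the convergent sum back into $g$ (continuity of polynomial evaluation in the $p$-adic topology, plus the fact that $r$ satisfies the formal identity $g(r) = 0$ in $\Z_p[[t]]$) gives $f(r) = g(r-r_0) = 0$. Finally $r \equiv r_0 \pmod p$ since every term of the sum has positive valuation, so $r$ genuinely lifts the mod-$p$ root $r_0$.

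The main obstacle is the $p$-integrality claim: a priori the coefficient $\frac{1}{(n+1)!}\binom{2n+1}{n-k} B_{n+k,k}(1!c_1,2!c_2,\dots)$ carries a factorial in the denominator, and one must show the factor of $p$ in $(n+1)!$ is absorbed. I expect this to follow from the identity chain used inside the proof of Proposition~\ref{RootThm} — rewriting the expression via \eqref{Comt3l'}, \eqref{Comt3n}, \eqref{Comt3n'} in a form where the denominators visibly cancel against the integer-valued combinatorial factors — together with the elementary fact that $B_{n+k,k}$ evaluated at integer arguments is an integer. (Alternatively, and more cheaply, one can sidestep the integrality bookkeeping by invoking the classical Hensel's lemma to know a priori that a unique lift $r \in \Z_p$ exists, and then identify it with \eqref{HenselRoot} by noting both are roots of $g$ reducing to $r_0$ mod $p$; but the formula-based argument is the one in the spirit of this paper.) The remaining steps — the Taylor expansion, the valuation estimate on $t^{n+1}$, and the continuity argument for $f(r)=0$ — are routine.
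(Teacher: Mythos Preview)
Your setup matches the paper exactly: shift by $r_0$, Taylor-expand to get $g(y)=\sum c_j y^j$ with $v_p(c_0)\ge 1$ and $v_p(c_1)=0$, then invoke Proposition~\ref{RootThm} for the formal root. The divergence is entirely in the convergence step, and there the paper's argument is both simpler and different from what you propose.

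You try to show that the bracketed coefficient
\[
\sum_{k=0}^n \frac{(-1)^{n-k+1}}{c_1^k\,(n+1)!}\binom{2n+1}{n-k}B_{n+k,k}(1!c_1,2!c_2,\dots)
\]
lies in $\Z_p$, i.e.\ that the $(n+1)!$ in the denominator is absorbed. That claim happens to be true (it follows from the fact that the compositional inverse of $\phi(x)=x+\tfrac{c_2}{c_1}x^2+\cdots\in\Z_p[[x]]$ again lies in $\Z_p[[x]]$), but you do not actually prove it: the ``identity chain'' from Proposition~\ref{RootThm} does not make the cancellation visible, and your alternative via classical Hensel is circular, since you cannot compare the formal series \eqref{HenselRoot} with the Hensel lift until you have already shown the series converges in $\Z_p$.

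The paper avoids this integrality question entirely. It regroups the $n$-th term as $\gamma_n\cdot \dfrac{c_0^{\,n+1}}{(n+1)!}$ with
\[
\gamma_n=\sum_{k=0}^n \frac{(-1)^{n-k+1}}{c_1^{\,k+n+1}}\binom{2n+1}{n-k}B_{n+k,k}(1!c_1,2!c_2,\dots),
\]
so that $\gamma_n\in\Z_p$ is immediate (only units and integer binomial/Bell values appear). Then Legendre's formula gives $v_p((n+1)!)<(n+1)/(p-1)$, whence
\[
v_p\!\Big(\tfrac{c_0^{\,n+1}}{(n+1)!}\Big)>(n+1)\Big(1-\tfrac{1}{p-1}\Big)=\tfrac{p-2}{p-1}(n+1)\to\infty.
\]
This is precisely where the hypothesis $p>2$ enters, and it is the one idea your proposal is missing. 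Your route, if completed with the compositional-inverse argument, would give the sharper bound $v_p(\text{$n$-th term})\ge n+1$ and would work for $p=2$ as well; the paper's route is more elementary and explains the role of the odd-prime hypothesis.
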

\begin{proof}
Given $f(x)$ and $r_0\in\Z$ as above, consider the function $g(x) = f(r_0+x)$. The Taylor expansion of $g(x)$ at $x=0$ gives
\begin{align*}
 g(x) = f(r_0) + f'(r_0) x + \tfrac{f''(r_0)}{2!} x^2 +\dots +\tfrac{f^{(m)}(r_0)}{m!} x^m 
 = c_0 + c_1 x + c_2 x^2 +\dots + c_m x^m,
\end{align*}
with the property that $v_p(c_0)=v_p(f(r_0))\ge 1$ and $v_p(c_1)=v_p(f'(r_0))=0$. Thus $c_1\not=0$, and by Proposition~\ref{RootThm}, $g(x)$ has a formal root 
\begin{equation*}
 \varrho = \sum_{n=0}^\infty \gamma_n \frac{c_0^{n+1}}{(n+1)!},
\end{equation*}
where
\begin{equation*}
  \gamma_n = \sum_{k=0}^n \frac{(-1)^{n-k+1}}{c_1^{k+n+1}} \binom{2n+1}{n-k} 
  B_{n+k,k}(1!c_1, 2!c_2,\dots).
\end{equation*}
Since $v_p(c_1)=0$ and each $j!c_j$ is a $p$-adic integer, we have $\gamma_n \in\Z_p$ for every $n$. Moreover, if $n+1$ has the $p$-adic expansion $n+1=n_0+n_1p+n_2p^2+\cdots$, we have
\begin{equation*}
 v_p((n+1)!)=\frac{n+1-s_p(n+1)}{p-1} < \frac{n+1}{p-1},
\end{equation*}
where $s_p(n+1)=n_0+n_1+n_2+\cdots$. Therefore, since $v_p(c_0)\ge 1$, we get
\begin{equation*}
 v_p\big(\tfrac{c_0^{n+1}}{(n+1)!}\big)=v_p(c_0^{n+1}) - v_p((n+1)!)>n+1-\frac{n+1}{p-1} = \big(\tfrac{p-2}{p-1}\big)(n+1)\to \infty \text{ as } n\to\infty,
\end{equation*}
which implies that $\sum \gamma_n\, \frac{c_0^{n+1}}{(n+1)!}$ converges in $p\Z_p$. In conclusion, the formal root $\varrho$ is indeed a $p$-adic root of $g(x)$ and $r_0+\varrho\in\Z_p$ is a root of $f(x)$. 
\end{proof}

More generally, we have:
\begin{theorem}\label{extendedHensel}
Let $p>0$ be prime and let $f(x) = a_0 + a_1x + \cdots + a_m x^m$ be a polynomial in $\Z_p[x]$. Let $\nu, \kappa\in\Z$ such that $0\le 2\kappa<\nu$. If $r_0\in\Z$ is such that
\begin{equation*}
 f(r_0)\equiv 0\!\! \pmod{p^\nu} \;\text{ and }\; v_p(f'(r_0))=\kappa,
\end{equation*}
then $r_0$ lifts to a $p$-adic root $r$ of $f$ given by
\begin{equation*}
 r = r_0 + p^\kappa\sum_{n=0}^\infty \bigg[\sum_{k=0}^n \frac{(-1)^{n-k+1}}{c_1^k\,(n+1)!} \binom{2n+1}{n-k} 
 B_{n+k,k}(1!c_1, 2!c_2,\dots)\bigg] \Big(\frac{c_0}{c_1}\Big)^{n+1},
\end{equation*}
where $c_j=p^{(j-2)\kappa}\,\frac{f^{(j)}(r_0)}{j!}$ for $j=0,1,\dots,m$.
\end{theorem}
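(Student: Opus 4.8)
The plan is to reduce to Proposition~\ref{RootThm} by combining the translation $x\mapsto r_0+x$ used in the proof of Theorem~\ref{basicHensel} with a rescaling that turns the otherwise non-invertible linear coefficient into a unit. I would set $g(x)=f(r_0+x)=\sum_{j=0}^{m}\frac{f^{(j)}(r_0)}{j!}x^{j}$, then substitute $x=p^{\kappa}y$ and divide by $p^{2\kappa}$, obtaining
\begin{equation*}
 h(y):=p^{-2\kappa}g(p^{\kappa}y)=\sum_{j=0}^{m}p^{(j-2)\kappa}\frac{f^{(j)}(r_0)}{j!}y^{j}=\sum_{j=0}^{m}c_{j}y^{j},
\end{equation*}
with the $c_j$ exactly as in the statement. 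The normalization is arranged so that $v_p(c_1)=v_p(f'(r_0))-\kappa=0$, i.e. $c_1$ is a unit in $\Z_p$, and so that $f(r_0+p^{\kappa}y)=p^{2\kappa}h(y)$; thus a root of $h$ in $\Z_p$ produces a root of $f$.

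Next I would verify $h(y)\in\Z_p[y]$: each $f^{(j)}(r_0)/j!=\sum_{i\ge j}\binom{i}{j}a_i r_0^{i-j}$ lies in $\Z_p$; for $j\ge 2$ the factor $p^{(j-2)\kappa}$ has nonnegative valuation and only helps; $v_p(c_1)=0$; and $v_p(c_0)=v_p(f(r_0))-2\kappa\ge\nu-2\kappa\ge 1$ since $2\kappa<\nu$, so $c_0\in p\Z_p$. With $h$ a polynomial over $\Z_p$ with invertible linear term, Proposition~\ref{RootThm} applies and gives the formal root
\begin{equation*}
 y=\sum_{n=0}^{\infty}\bigg[\sum_{k=0}^{n}\frac{(-1)^{n-k+1}}{c_1^{k}(n+1)!}\binom{2n+1}{n-k}B_{n+k,k}(1!c_1,2!c_2,\dots)\bigg]\Big(\frac{c_0}{c_1}\Big)^{n+1},
\end{equation*}
so that $r_0+p^{\kappa}y$ is the $r$ claimed in the theorem.

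It then remains to show this series converges in $\Z_p$, which I would do by the same valuation estimate as in the proof of Theorem~\ref{basicHensel}. Writing the $n$-th summand as $\gamma_n c_0^{n+1}/(n+1)!$, the coefficient $\gamma_n=\sum_{k=0}^{n}\frac{(-1)^{n-k+1}}{c_1^{k+n+1}}\binom{2n+1}{n-k}B_{n+k,k}(1!c_1,\dots)$ lies in $\Z_p$ because $1/c_1\in\Z_p$ and each Bell-polynomial argument $j!c_j=p^{(j-2)\kappa}f^{(j)}(r_0)$ is a $p$-adic integer; since $v_p(c_0)\ge\nu-2\kappa\ge 1$ and $v_p((n+1)!)<(n+1)/(p-1)$ by Legendre's formula, the $n$-th summand has valuation $>(n+1)\big(v_p(c_0)-\tfrac{1}{p-1}\big)\to\infty$ (subject to the same restriction on $p$ as in Theorem~\ref{basicHensel}). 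Hence the series converges in $p\Z_p$, one has $v_p(p^{\kappa}y)\ge\nu-\kappa>\kappa$, and $f(r_0+p^{\kappa}y)=p^{2\kappa}h(y)=0$ shows that $r=r_0+p^{\kappa}y$ is a $p$-adic root of $f$ reducing to $r_0$ modulo $p$. The only genuinely new ingredient beyond Theorem~\ref{basicHensel} is the choice of normalization in the first step together with the coefficient check in the second, and this is exactly where the hypothesis $2\kappa<\nu$ (the strong Hensel condition $\norm{f(r_0)}_p<\norm{f'(r_0)}_p^{2}$) is used; the convergence argument and the final unwinding are then routine, so I expect the only real subtlety to be keeping the $p$-adic valuations of the rescaled coefficients straight.
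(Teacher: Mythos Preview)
Your proposal is correct and follows essentially the same route as the paper: the paper defines $g(x)=p^{-2\kappa}f(r_0+p^{\kappa}x)$ in one step (rather than your two steps of translate then rescale), observes that $v_p(c_0)\ge 1$ and $v_p(c_1)=0$ follow from $2\kappa<\nu$, and then simply says ``proceed as in the proof of Theorem~\ref{basicHensel}.'' Your write-up is in fact slightly more careful than the paper's, since you flag explicitly that the convergence estimate inherits the restriction $p>2$ from Theorem~\ref{basicHensel}, a point the paper's proof does not address despite the statement allowing any prime.
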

\begin{proof}
The proof is similar to the one for the previous theorem. Let $r_0$ be a root of $f$ modulo $p^\nu$ and let $\kappa$ be the $p$-adic valuation of $f'(r_0)$. Consider $g(x)=p^{-2\kappa}f(r_0+p^\kappa x)$. A Taylor expansion of $g(x)$ at $0$ gives
\begin{align*}
 g(x) &= p^{-2\kappa}f(r_0) + p^{-\kappa}f'(r_0) x + \tfrac{f''(r_0)}{2!} x^2 +\dots 
 + p^{(m-2)\kappa}\,\tfrac{f^{(m)}(r_0)}{m!} x^m\\
 &= c_0 + c_1 x + c_2 x^2 +\dots + c_m x^m.
\end{align*}
If $0\le 2\kappa<\nu$, then $v_p(c_0)\ge 1$ and $v_p(c_1)=0$ since $v_p(f(r_0))\ge \nu>2\kappa$ and $v_p(f'(r_0))=\kappa$. At this point, we can proceed as in the proof of Theorem~\ref{basicHensel} and conclude that the formal root of $g(x)$ provided by \eqref{formalRoot} is indeed a $p$-adic root of $g(x)$. If we denote that root by $\varrho$, then $r=r_0+p^\kappa \varrho$ is a $p$-adic root of the polynomial $f(x)$. 
\end{proof}

In the case of quadratic and cubic polynomials, one can use known properties of Bell polynomials to give a  simpler representation of the corresponding Hensel's roots.

\subsection*{Quadratic Polynomials}
Let $p>2$ and $f(x)=a_0+a_1 x+a_2 x^2 \in \Z_p[x]$, $a_2\not=0$. If there is an $r_0\in\Z$ such that $f(r_0)\equiv 0$ (mod $p$) and $v_p(f'(r_0))=0$, then by Theorem~\ref{basicHensel} and elementary Bell polynomial identities, the $p$-adic lift of $r_0$ may be written as
\begin{equation} \label{quadraticRoot}
 r = r_0 - \frac{c_0}{c_1}\sum_{n=0}^\infty \binom{2n}{n}\frac{1}{n+1} \Big(\frac{c_0 c_2}{c_1^2}\Big)^n\in\Z_p,
\end{equation}
where $c_0=f(r_0)$, $c_1=f'(r_0)$, and $c_2=a_2$. Note that $\binom{2n}{n}\frac{1}{n+1}\in\Z$ are the well-known Catalan numbers.

\begin{example} 
Let us consider $f(x)=1+11x-5x^2$ over $\Z_7$. This polynomial has two simple roots mod 7, $r_0=1, 4$. Since $c_0=f(1)=7$, $c_1=f'(1)=1$, and $c_2=-5$, the lift of $r_0=1$ in $\Z_7$ is given by
\begin{equation*}
 r = 1 - \sum_{n=0}^\infty \binom{2n}{n}\frac{(-5)^n}{n+1}\, 7^{n+1}.
\end{equation*}
On the other hand, since $f(4)=-35$ and $f'(4)=-29$, the lift of $r_0=4$ in $\Z_7$ is given by
\begin{equation*}
 r = 4 - \sum_{n=0}^\infty \binom{2n}{n}\frac{1}{n+1} \Big(\frac{5}{29}\Big)^{2n+1} 7^{n+1}.
\end{equation*}
Note that $1/29=1+3\cdot 7+7^2+7^3+2\cdot 7^4+5\cdot 7^5+O(7^6)$ is an element of $\Z_7$.
\end{example}

\begin{example} 
We now consider $f(x)=17+6x+2x^2$ over $\Z_5$. Modulo $5$ this polynomial has a double root, $r_0=1$. Since $f(1)=5^2$ and $f'(1)=2\cdot 5$, we cannot apply any of the above theorems directly. However, the polynomial
\begin{equation*}
 g(x) = \tfrac{1}{25}f(1+5x) = 1+2x+2x^2
\end{equation*}
has $1$ and $3$ as simple roots modulo $5$, so using \eqref{quadraticRoot}, we get the lifts 
\begin{gather*}
 1-\frac{5}{6}\sum_{n=0}^\infty \binom{2n}{n}\frac{1}{n+1} \Big(\frac{5}{18}\Big)^n \;\text{ and }\;
 3-\frac{25}{14}\sum_{n=0}^\infty \binom{2n}{n}\frac{1}{n+1} \Big(\frac{25}{98}\Big)^n \;\text{ in } \Z_{5}.
\end{gather*}
Therefore, the $5$-adic roots of $f(x)$ are given by 
\begin{gather*}
 1+ 5-\frac{25}{6}\sum_{n=0}^\infty \binom{2n}{n}\frac{1}{n+1} \Big(\frac{5}{18}\Big)^n \;\text{ and }\;
 1+ 3\cdot 5 -\frac{125}{14}\sum_{n=0}^\infty \binom{2n}{n}\frac{1}{n+1} \Big(\frac{25}{98}\Big)^n.
\end{gather*}
\end{example}

\subsection*{Cubic Polynomials}
Let $p>2$ and $f(x)=a_0+a_1 x+a_2 x^2 + a_3 x^3\in \Z_p[x]$, $a_3\not=0$. Once again, if there is an $r_0\in\Z$ such that $f(r_0)\equiv 0$ (mod $p$) and $v_p(f'(r_0))=0$, then Theorem~\ref{basicHensel} gives a formula for the $p$-adic lift of $r_0$. However, for cubic polynomials, it is more convenient to use the equation \eqref{altForm} and write the root as
\begin{equation*}
  r=r_0+\sum_{n=0}^\infty \bigg[\sum_{k=0}^n \frac{(-1)^{n+k+1}}{c_1^k\, n!}  \frac{(n+k)!}{(n+1)!}B_{n,k}(c_2, 2c_3,0,\dots)\bigg] \Big(\frac{c_0}{c_1}\Big)^{n+1},
\end{equation*}
where $c_j=f^{(j)}(r_0)/j!$ for $j=0,1,2,3$. Note that $B_{n,k}(c_2,2c_3,0,\dots)=0$ for $k < n/2$, and for $k\ge n/2$, identities \eqref{Comt3n} and \eqref{Comt3n'} give
\begin{align*}
B_{n,k}(c_2,2c_3,0,\dots) 
 &= \sum_{\substack{\kappa\le k\\ \nu\le n}} \binom{n}{\nu}B_{\nu,\kappa}(c_2,0,\dots) B_{n-\nu,k-\kappa}(0,2c_3,0,\dots) \\
 &= \sum_{\kappa\le k} \binom{n}{\kappa}c_2^\kappa B_{n-\kappa,k-\kappa}(0,2c_3,0,\dots) \\
 &= \binom{n}{2k-n}c_2^{2k-n} \frac{[2(n-k)]!}{(n-k)!}c_3^{n-k} \\
 &= \frac{n!}{(2k-n)!(n-k)!}\, c_2^{2k-n}c_3^{n-k}.
\end{align*}
Therefore, 
\begin{align*}
  r &= r_0+\sum_{n=0}^\infty \bigg[\sum_{k\ge n/2}^n \frac{(-1)^{n+k+1}}{c_1^k\, n!}  \frac{(n+k)!}{(n+1)!}
   \frac{n!}{(2k-n)!(n-k)!}\, c_2^{2k-n}c_3^{n-k}\bigg] \Big(\frac{c_0}{c_1}\Big)^{n+1},
  \intertext{and with the change $n=2k-j$,} 
  &= r_0+ \frac{c_0}{c_1}\sum_{k=0}^\infty \bigg[\sum_{j=0}^k \frac{(-1)^{k-j+1}}{c_1^k(2k-j+1)}  
  \binom{k}{j}\binom{3k-j}{k} c_2^{j}c_3^{k-j}\bigg] \Big(\frac{c_0}{c_1}\Big)^{2k-j}.
\end{align*}
In summary, if $r_0$ is a simple root mod $p$ of $f(x)=a_0+a_1 x+a_2 x^2 + a_3 x^3\in\Z_p[x]$, then the $p$-adic lift of $r_0$ is given by
\begin{equation}\label{cubicRoot}
  r = r_0-\frac{c_0}{c_1}\sum_{k=0}^\infty \bigg[\sum_{j=0}^k \frac{(-1)^{k-j}c_2^j}{2k-j+1} 
  \binom{k}{j}\binom{3k-j}{k}\Big(\frac{c_0c_3}{c_1}\Big)^{k-j} \bigg] \Big(\frac{c_0}{c_1^2}\Big)^{k},
\end{equation}
where $c_0=f(r_0)$, $c_1=f'(r_0)$, $c_2=f''(r_0)/2$, and $c_3=a_3$. 

\bigskip
Following the same steps as for cubic polynomials, we can obtain the following result.
\begin{proposition} 
Let $p>2$ and $1<\ell<m$. Suppose $f(x)=a_0+a_1 x+a_{\ell} x^{\ell} + a_{m} x^{m} \in \Z_p[x]$ is such that $p\,|\,f(0)$ but $p\!\not|\, f'(0)$. Then $r_0=0$ lifts to a $p$-adic root of $f$ given by
\begin{equation*}
 r=- \frac{a_0}{a_1}\sum_{k=0}^\infty \bigg[\sum_{j=0}^k 
 \frac{(-1)^{m(k-j)+\ell j}\, a_\ell^{j}}{m(k-j)+\ell j-k+1} \binom{k}{j}\binom{m(k-j)+\ell j}{k}
 \Big(\frac{a_0^{m-\ell}a_m}{a_1^{m-\ell}}\Big)^{k-j}\bigg] 
 \Big(\frac{a_0^{\ell-1}}{a_1^\ell}\Big)^{k}. 
\end{equation*}
\end{proposition}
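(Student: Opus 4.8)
The plan is to imitate exactly the computation carried out for cubic polynomials in the preceding paragraph, now with the three-term polynomial $f(x)=a_0+a_1x+a_\ell x^\ell+a_m x^m$ and root candidate $r_0=0$. Since $p\mid f(0)$ and $p\nmid f'(0)$, Theorem~\ref{basicHensel} applies with $c_j=f^{(j)}(0)/j!$, so $c_0=a_0$, $c_1=a_1$, $c_\ell=a_\ell$, $c_m=a_m$, and all other $c_j=0$. First I would start from the form \eqref{altForm} of the root, i.e.
\begin{equation*}
 r = \sum_{n=0}^\infty \bigg[\sum_{k=0}^n \frac{(-1)^{n+k+1}}{a_1^k\,n!}\,\frac{(n+k)!}{(n+1)!}\,B_{n,k}(1!a_2,2!a_3,\dots)\bigg]\Big(\frac{a_0}{a_1}\Big)^{n+1},
\end{equation*}
and observe that here the argument sequence of $B_{n,k}$ has only two nonzero entries, namely $(\ell-1)!\,a_\ell$ in slot $\ell-1$ and $(m-1)!\,a_m$ in slot $m-1$.

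The core of the argument is the evaluation of $B_{n,k}$ on this two-term sequence. I would use the same device as in the cubic case: write the sequence as a "sum" of two sequences, each supported on a single slot, and apply the convolution identity \eqref{Comt3n} together with the single-slot evaluation \eqref{Comt3n'}. Concretely, $B_{\nu,\kappa}$ of the sequence supported on slot $\ell-1$ is nonzero only when $\nu=(\ell-1)\kappa$, and $B_{n-\nu,k-\kappa}$ of the sequence supported on slot $m-1$ is nonzero only when $n-\nu=(m-1)(k-\kappa)$; solving these forces, after writing $j=\kappa$ for the multiplicity of the $a_\ell$ term, the relation $n=(\ell-1)j+(m-1)(k-j)+k=\ell j+m(k-j)-k$. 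This collapses the double sum over $(\nu,\kappa)$ to a single term, yielding a closed form for $B_{n,k}$ as a product of a multinomial-type coefficient with $a_\ell^{\,j}a_m^{\,k-j}$ (up to the factorials $((\ell-1)!)^j,\ (\ell-1)\kappa$-type factors and $[(m-1)(k-j)]!/(m-1)!^{k-j}$ coming from \eqref{Comt3n'}, which combine with the $n!$, $(n+k)!$ in the outer summand).

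Having substituted this value of $B_{n,k}$, the outer sum over $n$ is reindexed by $(k,j)$ via $n=\ell j+m(k-j)-k$; I would then bookkeep the powers of $a_0/a_1$, noting that $(a_0/a_1)^{n+1}=(a_0/a_1)(a_0/a_1)^{\ell j+m(k-j)-k}$ and regrouping so that the $a_m$-dependence appears as $(a_0^{m-\ell}a_m/a_1^{m-\ell})^{k-j}$ and the residual $a_0$-power as $(a_0^{\ell-1}/a_1^\ell)^{k}$, exactly matching the asserted formula. The sign $(-1)^{n+k+1}$ becomes $(-1)^{\ell j+m(k-j)+1}$ after reducing mod $2$, which absorbs into the stated $(-1)^{m(k-j)+\ell j}$ up to the overall minus sign pulled out front. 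The binomial factors $\binom{k}{j}$ and $\binom{m(k-j)+\ell j}{k}$ arise precisely as in the cubic computation: the first from choosing which $\kappa=j$ of the compositions use the $a_\ell$ slot, the second from the ratio $\frac{(n+k)!}{(n+1)!}$ against the factorials produced by \eqref{Comt3n'}, after the identity $n+k=\ell j+m(k-j)$.

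The main obstacle I anticipate is purely the factorial/coefficient bookkeeping in the last two steps: keeping track of which factorials come from $B_{n,k}$ via \eqref{Comt3l'}–\eqref{Comt3n'}, which from the $\frac{(n+k)!}{n!\,(n+1)!}$ prefactor, and verifying they assemble into the clean pair $\binom{k}{j}\binom{m(k-j)+\ell j}{k}\frac{1}{m(k-j)+\ell j-k+1}$ — in particular that the denominator $n+1=\ell j+m(k-j)-k+1$ is exactly the Catalan-like correction that appears. The cubic case ($\ell=2$, $m=3$) is a complete template: there $n=2j+3(k-j)-k=2k-j$, the denominator is $2k-j+1$, and the coefficients are $\binom{k}{j}\binom{3k-j}{k}$, which is the $\ell=2,m=3$ specialization of the claimed formula; so the proof is a faithful transcription of that computation with $2\rightsquigarrow\ell$ and $3\rightsquigarrow m$, and no genuinely new idea is required.
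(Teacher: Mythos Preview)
Your proposal is correct and is exactly the approach the paper intends: the paper supplies no separate proof for this proposition, saying only that one follows ``the same steps as for cubic polynomials,'' which is precisely what you carry out. (One harmless slip: in the line ``$n=(\ell-1)j+(m-1)(k-j)+k=\ell j+m(k-j)-k$'' the extra ``$+k$'' on the left should be deleted; you use the correct relation $n+k=\ell j+m(k-j)$ in the remainder of the argument.)
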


\subsection*{Roots of Unity} 
Let $p>2$ and $f(x)=x^m-1$. Assume that $r_0$ is a single root mod $p$ of $f$. Then $r_0$ lifts to a $p$-adic root of the form \eqref{HenselRoot} with $c_0=r_0^m-1$ and $c_j=\binom{m}{j}r_0^{m-j}$ for $j=1,\dots,m$. Now, since
\begin{equation*}
  j!c_j = r_0^{m-j}(m)_j \;\text{ with }\; (m)_j=\frac{m!}{(m-j)!},
\end{equation*}
homogeneity properties of the Bell polynomials together with identity \eqref{WangEx3.2} give
\begin{align*}
 B_{n+k,k}(1! c_2, 2!c_3,\dots) &= B_{n+k,k}(r_0^{m-1}(m)_1,r_0^{m-2}(m)_2,\dots) \\
 &= r_0^{mk-(n+k)} B_{n+k,k}((m)_1,(m)_2,\dots) \\
 &= r_0^{mk-(n+k)} \frac{1}{k!} \sum_{j=0}^k (-1)^{k-j} \binom{k}{j} (jm)_{n+k}.
\end{align*}
Therefore,
\begin{align*}
\sum_{k=0}^n & \frac{(-1)^{n-k+1}}{c_1^k\,(n+1)!} \binom{2n+1}{n-k} B_{n+k,k}(1!c_1, 2!c_2,\dots) \\
 &= \sum_{k=0}^n \frac{(-1)^{n-k+1}}{(r_0^{m-1}m)^k\,(n+1)!} \binom{2n+1}{n-k} 
  r_0^{mk-(n+k)} \frac{1}{k!} \sum_{j=0}^k (-1)^{k-j} \binom{k}{j} (jm)_{n+k} \\
 &= \sum_{k=0}^n \sum_{j=0}^k\, \frac{(-1)^{n-j+1}}{r_0^{n}\,m^k\,(n+1)!} \binom{2n+1}{n-k} 
  \frac{1}{k!} \binom{k}{j} (jm)_{n+k}.
\end{align*}
Finally, the $p$-adic lift of $r_0$ is given by
\begin{align*}
 r &= r_0 - \frac{c_0}{c_1}\sum_{n=0}^\infty \bigg[\sum_{k=0}^n \sum_{j=0}^k\, \frac{(-1)^{n-j}}{m^k\,(n+1)!} 
 \binom{2n+1}{n-k} \frac{1}{k!} \binom{k}{j} (jm)_{n+k}\bigg] \Big(\frac{c_0}{r_0c_1}\Big)^n,
\end{align*}
where $c_0=f(r_0)$ and $c_1=f'(r_0)$.

\bigskip
For $m=p-1$, the above expression gives an explicit formula for the Teichm\"uller lifts.

\begin{proposition}\label{roots_unity}
Let $p>2$. Every integer $q\in\{1,\dots,p-1\}$ is a $(p-1)$-st root of unity mod $p$ and lifts to a $p$-adic root of unity $\xi_q$ given by
\begin{equation*}
 \xi_q = q-\frac{c_0}{c_1}\sum_{n=0}^\infty \bigg[\sum_{k=0}^n \sum_{j=0}^k\, 
 \frac{(-1)^{n-j}}{(p-1)^k\,(n+1)!} \binom{2n+1}{n-k} \frac{1}{k!} \binom{k}{j} (j(p-1))_{n+k}\bigg] 
 \Big(\frac{c_0}{qc_1}\Big)^{n},
\end{equation*}
where $c_0=q^{p-1}-1$ and $c_1=(p-1)q^{p-2}$. 
\end{proposition}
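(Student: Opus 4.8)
The plan is to obtain Proposition~\ref{roots_unity} as the special case $m=p-1$ of the root-of-unity computation carried out immediately above it. First I would check that the hypotheses of Theorem~\ref{basicHensel} hold for $f(x)=x^{p-1}-1$ and $r_0=q$ with $q\in\{1,\dots,p-1\}$. By Fermat's little theorem $q^{p-1}\equiv 1\pmod p$, so $f(q)\equiv 0\pmod p$; moreover $f'(q)=(p-1)q^{p-2}$ is not divisible by $p$ since $p\nmid p-1$ and $p\nmid q$, hence $v_p(f'(q))=0$. Thus each such $q$ is a simple root of $f$ modulo $p$, Theorem~\ref{basicHensel} applies, and it produces a $p$-adic root $\xi_q$ of $x^{p-1}-1$ with $\xi_q\equiv q\pmod p$ (the correction term lies in $p\Z_p$, as in the proof of that theorem).

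Next I would invoke the reduction already performed in this subsection, verbatim, with $m=p-1$ and $r_0=q$: writing $j!c_j=q^{(p-1)-j}(p-1)_j$, using the homogeneity of the partial Bell polynomials together with identity \eqref{WangEx3.2}, the Hensel root \eqref{HenselRoot} is rewritten in the displayed form preceding the proposition, namely
\begin{equation*}
 r = r_0 - \frac{c_0}{c_1}\sum_{n=0}^\infty \bigg[\sum_{k=0}^n\sum_{j=0}^k \frac{(-1)^{n-j}}{m^k\,(n+1)!}\binom{2n+1}{n-k}\frac{1}{k!}\binom{k}{j}(jm)_{n+k}\bigg]\Big(\frac{c_0}{r_0 c_1}\Big)^n.
\end{equation*}
Substituting $m=p-1$, $r_0=q$, $c_0=f(q)=q^{p-1}-1$, and $c_1=f'(q)=(p-1)q^{p-2}$ then yields exactly the formula claimed for $\xi_q$.

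Finally I would record that $\xi_q$ is a genuine $p$-adic root of unity: by construction it is a root of $x^{p-1}-1$ in $\Z_p$, so $\xi_q^{p-1}=1$, and since $\xi_q\equiv q\pmod p$ the elements $\xi_1,\dots,\xi_{p-1}$ are the $p-1$ distinct Teichm\"uller representatives of the nonzero residues mod $p$. The only additional point deserving a sentence is that every term of the series lies in $\Z_p$ and the series converges in $p\Z_p$; both facts are inherited directly from the proof of Theorem~\ref{basicHensel}, the sole new observation being that $1/(p-1)^k\in\Z_p$ because $p\nmid p-1$.

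I expect the main obstacle to be organizational rather than conceptual: one must make sure that pushing the substitution $m=p-1$ through the nested Pochhammer symbol $(jm)_{n+k}$ and through the denominators $m^k$, $k!$, $(n+1)!$ does not spoil the $p$-integrality of the bracketed coefficient, i.e., re-confirm in this repackaged form what Theorem~\ref{basicHensel} already guarantees for the quantity $\gamma_n$. Beyond that, no estimate or identity outside the general case $x^m-1$ is needed, so the proof is short.
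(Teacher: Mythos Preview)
Your proposal is correct and matches the paper's approach exactly: the proposition is obtained simply by specializing the general root-of-unity computation for $x^m-1$ (carried out immediately before the statement) to $m=p-1$, after noting via Fermat's little theorem that each $q\in\{1,\dots,p-1\}$ satisfies the hypotheses of Theorem~\ref{basicHensel}. The paper gives no separate proof beyond the sentence ``For $m=p-1$, the above expression gives an explicit formula for the Teichm\"uller lifts,'' so your write-up is, if anything, more detailed than the original.
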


\section{Factorization of polynomials over $\Z[[x]]$}\label{sec:Factorization}

Let $f(x)=f_0+f_1x+f_2x^2+\dots$ be a formal power series in $\Z[[x]]$. It is easy to prove that $f(x)$ is invertible in $\Z[[x]]$ if and only if $|f_0|=1$. A natural question, initially discussed in \cite{BiGi}, is whether or not a non-invertible element of $\Z[[x]]$ can be factored over $\Z[[x]]$. In recent years, this question has been investigated by several authors, leading to sufficient and in some cases necessary reducibility criteria, see e.g. \cite{Bezivin, BGW2012a, Elliott}. In particular, \cite{Elliott} deals with the factorization of formal power series over principal ideal domains. 

For the case at hand, the following elementary results are known. The formal power series $f(x)=f_0+f_1x+f_2x^2+\dots$ is irreducible in $\Z[[x]]$ if $|f_0|$ is prime, or if $|f_0|=p^w$ with $p$ prime, $w\in\mathbb{N}$, and $\gcd(p,f_1)=1$.

On the other hand, if $f_0$ is neither a unit nor a prime power, then $f(x)$ is reducible. In this case, the factorization algorithm is simple and relies on a recursion and a single diophantine equation, see \cite[Prop.~3.4]{BiGi}. 

Finally, in the remaining case when $f_0$ is a prime power and $f_1$ is divisible by $p$, the reducibility of $f(x)$ in $\Z[[x]]$ is linked to the existence of a $p$-adic root of positive valuation. The goal of this section is to give an explicit factorization over $\Z[[x]]$ for reducible polynomials of the form 
\begin{equation}\label{polynomial}
 f(x)=p^{w}+p^m\gamma_1 x+\gamma_2 x^2+\dots+\gamma_d x^d, \quad m\ge 1,\; w\ge 2, \;d\ge 2,
\end{equation} 
where $\gamma_1,\dots, \gamma_d\in\Z$ and $\gcd(p,\gamma_1)=1$. This is the only type of polynomial for which the reducibility and factorization over $\Z[[x]]$ is not straightforward.

\begin{theorem}\label{FactorizationThm}
Let $p$ be an odd prime and let $f(x)$ be a polynomial of the form \eqref{polynomial}. Assume that $f$ has a simple root $r\in p\Z_p$ with $v_p(r)=\ell\le m$ and $r=p^\ell(1+\sum_{j=1}^\infty e_j p^{\ell j})$ with $e_j\in\Z$. Then $f(x)$ admits the factorization
\begin{equation*}
 f(x) =\bigg(p^\ell-x-x\sum_{n=1}^\infty a_n x^n\bigg)
 \bigg(p^{w-\ell}+(p^{w-2\ell}+p^{m-\ell}\gamma_1) x + x\sum_{n=1}^\infty b_n x^n\bigg),
\end{equation*}
where the coefficients $a_n$ are given by \eqref{Acoeff}, and $b_n=\hat b_n/p^{\ell n}$ with $\hat b_n$ as in \eqref{Bhat_coeff}.
\end{theorem}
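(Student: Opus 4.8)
The plan is to start from the $p$-adic root $r = p^{\ell}(1+\sum_{j\ge 1} e_j p^{\ell j})$ and use it to build the linear factor explicitly, then divide $f$ by it in $\Z[[x]]$ and check that the quotient has integer coefficients of the claimed shape. First I would introduce the power series $u(x) = p^{\ell} - x - x\sum_{n\ge 1} a_n x^n$ and demand that $r$ be its (unique, since the linear coefficient is a unit) root in $p\Z_p$; this forces a recursion for the $a_n$, which I would solve by applying Proposition~\ref{RootThm} (equivalently Lemma~\ref{FormalInversion}) to read off the $a_n$ in closed form in terms of partial Bell polynomials evaluated at the $e_j$'s — this is the content of the referenced formula \eqref{Acoeff}. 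The key point here is that $v_p(r)=\ell$ with $\ell \le m$ and $\ell \le w/2$ (the latter because $r$ is a root of $f$ and $f(0)=p^{w}$ forces $2v_p(r)\le w$ in this regime), which guarantees that all the relevant divisions by powers of $p$ land back in $\Z$, so that $a_n\in\Z$.

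Next I would perform the formal division $g(x) := f(x)/u(x)$ in $\Q[[x]]$, writing $g(x) = g_0 + g_1 x + x\sum_{n\ge 1} \tilde b_n x^n$. Matching the constant and linear coefficients of $f = u\cdot g$ immediately gives $g_0 = p^{w-\ell}$ and $g_1 = p^{w-2\ell} + p^{m-\ell}\gamma_1$, which already matches the statement (and is where the hypotheses $\ell \le m$, $2\ell \le w$ are used again to keep these in $\Z$). The remaining coefficients $\tilde b_n$ are then determined recursively by the convolution identity coming from $f = u \cdot g$; solving this recursion and using the explicit $a_n$ produces the expression $\hat b_n$ of \eqref{Bhat_coeff}, and I would then show $\tilde b_n = \hat b_n / p^{\ell n}$, i.e. that $p^{\ell n}\mid \hat b_n$, so that $b_n := \tilde b_n \in \Z$. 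The cleanest way to organize the divisibility bookkeeping is probably to track $p$-adic valuations term by term through the recursion, exactly as in the proofs of Theorems~\ref{basicHensel} and~\ref{extendedHensel}, rather than manipulating the Bell-polynomial closed forms directly.

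Finally I would verify that the product $u(x)\,g(x)$ is genuinely a polynomial equal to $f(x)$ and not merely a power series agreeing with it: since $f$ is a polynomial of degree $d$ and $u$ has a unit (up to sign) linear coefficient, $u$ divides $f$ in $\Z[[x]]$ iff $r$ is a root of $f$ in $p\Z_p$, and the quotient is automatically a polynomial of degree $d-1$; this is the structural fact from \cite{BGW2012a} that motivates the whole setup, and I would invoke it to close the argument.

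I expect the main obstacle to be the divisibility claim $p^{\ell n}\mid \hat b_n$ — equivalently $b_n\in\Z$. Getting the constant and linear terms right is routine, and the closed forms for $a_n$ and $\hat b_n$ follow mechanically from Lagrange inversion; but showing that every higher coefficient of the second factor is an integer requires a careful valuation estimate that exploits both $v_p(r)=\ell\le m$ and the fact that $r$ is a \emph{simple} root (so the relevant linear coefficient is exactly a unit, not merely nonzero). I would handle this by an induction on $n$ tracking $v_p(\hat b_n)$ along the recursion, mirroring the convergence estimate $v_p(c_0^{n+1}/(n+1)!) > \big(\tfrac{p-2}{p-1}\big)(n+1)$ used in the proof of Theorem~\ref{basicHensel}; the hypothesis $p>2$ enters here exactly as it does there.
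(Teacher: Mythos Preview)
Your overall architecture matches the paper's --- build $A(x)=p^\ell-\phi^{-1}(x)$ via Lagrange inversion, set $B=f/A$, verify the low-order terms, then show $b_n\in\Z$ --- but the plan for the crucial divisibility $p^{\ell n}\mid\hat b_n$ has a genuine gap. A naive induction tracking $v_p$ through the convolution recursion does not work: the constant term of $A$ is $p^\ell$, not a unit, so solving the recursion for $b_n$ requires dividing by $p^\ell$ at each step with no evident compensating factor in the numerator. The estimate $v_p(c_0^{n+1}/(n+1)!)>\tfrac{p-2}{p-1}(n+1)$ from Theorem~\ref{basicHensel} is a $p$-adic convergence bound, not a tool that yields the exact divisibility by $p^{\ell n}$ needed here, and the hypothesis $p>2$ does not enter the divisibility argument in that way. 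The paper's device is different: after rescaling to $\hat A(x)=p^{-\ell}A(p^\ell x)$ (constant term $1$, so $\hat A^{-1}\in\Z[[x]]$ with coefficients $t_n$), it introduces auxiliary series $T_n(x)$ satisfying $t_n\equiv T_n(p^\ell)\pmod{p^{\ell(n+2)}}$, proves the closed form $T_n(x)=E(x)^{-n-2}\big(E(x)+xE'(x)\big)$, and hence the recurrence $T_{n-1}=E\cdot T_n$. This recurrence collapses the linear combination defining $\hat b_n$ into $\hat f(E(x))\,T_n(x)$ evaluated at $x=p^\ell$, plus truncation errors. Since $E(p^\ell)=p^{-\ell}r$ is a root of $\hat f$, the main term vanishes \emph{exactly}, and the truncation errors each carry the required factor $p^{\ell n}$. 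The divisibility thus comes from an algebraic identity forced by $f(r)=0$, not from an inductive valuation count; this is the idea your proposal is missing.

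One further correction: your final paragraph asserts that the quotient is ``automatically a polynomial of degree $d-1$''. It is not. The factor $A(x)\in\Z[[x]]$ is not $x-r$ but an infinite series having $r$ as its $p$-adic root, and $B=f/A$ is generically an infinite power series (indeed the theorem is stated with an infinite sum in the second factor). Once $b_n\in\Z$ is established, the identity $f=AB$ holds by construction in $\Z[[x]]$ and nothing further needs to be checked.
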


\begin{remark}
\begin{enumerate}[(a)]
\item As shown in Lemma~\ref{bhat_divisibility}, $\hat b_n$ is divisible by $p^{\ell n}$, so $b_n\in\Z$ for every $n$.
\item If $r\in p\Z_p$ is a root of $f$ with $v_p(r)=\ell\le m$, then $2\ell\le w$.
\item If $w\le 2m$ and $f$ has a root $r\in p\Z_p$, then $v_p(r)=\ell\le m$ holds. If $w>2m$, then $0$ lifts to a $p$-adic root of $f$, but it is not necessarily true that $f$ has a root of valuation less than or equal to $m$. This property depends on the coefficients $\gamma_2, \gamma_3,\dots$. However, even if that condition fails, $f(x)$ is still reducible and a factorization can be obtained through the algorithm given in \cite[Prop.~2.4]{BGW2012a}.
\item A $p$-adic integer $r$ with $v_p(r)=\ell$ can always be written as $r=p^\ell(e_0+\sum_{j=1}^\infty e_j p^{\ell j})$ with $e_0\in\Z_p^*$. For factorization purposes, we can assume without loss of generality $e_0=1$. Otherwise, consider $g(x)=f(x/e_0^*)$, where $e_0^*$ is such that $e_0e_0^*=1\!\pmod{p^\ell}$.
\item As discussed in \cite{BGW2012a}, the existence of a root in $p\Z_p$ is in many cases (e.g. when $d\le 3$) a necessary condition for the polynomial \eqref{polynomial} to factor over $\Z[[x]]$.
\end{enumerate}
\end{remark}

\begin{remark}
If $f$ has a multiple root in $p\Z_p$, then $f(x)$ admits the simpler factorization
\begin{equation*}
 f(x)= G(x) f_{\text{red}}(x),
\end{equation*}
where $G(x)=\gcd(f(x),f'(x))\in\Z[x]$ and $f_{\text{red}}(x)=f(x)/G(x)$. 
\end{remark}

\subsection*{Proof of Theorem~\ref{FactorizationThm}}
Let $r=p^\ell\big(1+\sum_{j=1}^\infty e_j p^{\ell j}\big)$ be the $p$-adic root of $f$ and define
\begin{equation}\label{Eseries}
\phi(x)=xE(x) \quad\text{with}\quad E(x)=1+\sum_{j=1}^\infty e_j x^j.
\end{equation}
Thus $r=\phi(p^\ell)$ in $\Z_p$ and therefore $p^\ell=\phi^{-1}(r)$. Define $A(x)=p^\ell-\phi^{-1}(x)$. So $A(r)=0$ in $\Z_p$, and by Lemma~\ref{FormalInversion}, we have
\begin{equation*}
 A(x)=p^\ell-\phi^{-1}(x)=p^\ell-x\Big(1+\sum_{n=1}^\infty a_n x^n\Big),
\end{equation*}
where
\begin{equation} \label{Acoeff}
 a_n = \frac{1}{n!}\sum_{k=1}^n (-1)^k \frac{(n+k)!}{(n+1)!}B_{n,k}(1!e_1,2!e_2,\dots) \in\Z.
\end{equation}
Our goal is to find $B(x)\in \Z[[x]]$ such that $f(x)=A(x)B(x)$. For convenience, consider
\begin{equation*}
\hat f(x)=p^{-2\ell}f(p^\ell x) \quad\text{and}\quad \hat A(x) = p^{-\ell}A(p^\ell x).
\end{equation*}
Thus
\begin{equation*}
 \hat A(x) = 1 - x - x\sum_{n=1}^\infty p^{\ell n}a_n x^n.
\end{equation*}

\begin{proposition}
The reciprocal of $\hat A(x)$ is a power series in $\Z[[x]]$ of the form
\begin{equation*}
\hat A(x)^{-1} = \frac{1}{\hat{A}(x)} = 1+ x + x\sum_{n=1}^\infty t_n x^n
\end{equation*}
with
\begin{equation} \label{reciprocalcoeff}
t_n = 1+ \sum_{k=1}^{n} p^{\ell k}\,\frac{n+1-k}{k!}
  \sum_{{j}=1}^{k}(-1)^{j} \frac{(n+{j})!}{(n+1)!} B_{k,{j}}(1!e_1,2! e_2,\dots) \in\Z.
\end{equation}
\end{proposition}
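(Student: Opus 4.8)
The plan is to recognize $\hat A(x)^{-1}$ as the image of $\tfrac{1}{1-w}$ under a Lagrange inversion, and then to expand the result with the partial Bell polynomials. First I would rewrite $\hat A$: since $A(x)=p^\ell-\phi^{-1}(x)$ and, by Lemma~\ref{FormalInversion}, $\phi^{-1}(u)=u\bigl(1+\sum_{n\ge1}a_nu^n\bigr)$ with the integers $a_n$ of \eqref{Acoeff}, we get $\hat A(x)=p^{-\ell}A(p^\ell x)=1-\Psi(x)$, where $\Psi(x):=p^{-\ell}\phi^{-1}(p^\ell x)=x+\sum_{n\ge1}a_np^{\ell n}x^{n+1}\in x\Z[[x]]$. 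In particular $\hat A\in\Z[[x]]$ with $\hat A(0)=1$, so $\hat A(x)^{-1}\in\Z[[x]]$ and the assertion $t_n\in\Z$ is automatic once the coefficients are identified. Applying $p^{-\ell}$ to $\phi\bigl(\phi^{-1}(p^\ell x)\bigr)=p^\ell x$ and using $\phi(t)=tE(t)$ from \eqref{Eseries} gives $\Psi(x)\,E\bigl(p^\ell\Psi(x)\bigr)=x$; that is, $w=\Psi(x)$ is the formal solution of $w=x\varphi(w)$ with $\varphi(w)=1/E(p^\ell w)$ and $\varphi(0)=1$.

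Next, by the Lagrange inversion formula applied to $H(w)=\tfrac{1}{1-w}$, for every $N\ge1$,
\begin{equation*}
[x^N]\,\hat A(x)^{-1}=[x^N]\,\frac{1}{1-\Psi(x)}=\frac{1}{N}\,[w^{N-1}]\!\left(\frac{1}{(1-w)^2}\,E(p^\ell w)^{-N}\right).
\end{equation*}
I would then expand $E(p^\ell w)^{-N}=\bigl(1+\sum_{m\ge1}e_m(p^\ell w)^m\bigr)^{-N}$ by combining the binomial series with the defining generating function of the partial Bell polynomials (see the appendix); with the normalization $x_m=m!\,e_m$ this reads
\begin{equation*}
E(p^\ell w)^{-N}=1+\sum_{k\ge1}\frac{p^{\ell k}w^k}{k!}\sum_{j=1}^k(-N)_j\,B_{k,j}(1!e_1,2!e_2,\dots),\qquad (-N)_j=(-1)^j\frac{(N+j-1)!}{(N-1)!}.
\end{equation*}
Multiplying by $\tfrac{1}{(1-w)^2}=\sum_{m\ge0}(m+1)w^m$, extracting the coefficient of $w^{N-1}$ (the $k=0$ term contributes $N$, and each term with $1\le k\le N-1$ contributes $(N-k)\tfrac{p^{\ell k}}{k!}\sum_{j=1}^k(-N)_jB_{k,j}$), and dividing by $N$ gives
\begin{equation*}
[x^N]\,\hat A(x)^{-1}=1+\sum_{k=1}^{N-1}(N-k)\,\frac{p^{\ell k}}{k!}\sum_{j=1}^k(-1)^j\frac{(N+j-1)!}{N!}\,B_{k,j}(1!e_1,2!e_2,\dots).
\end{equation*}
Since $\hat A(x)^{-1}=1+x+\sum_{n\ge1}t_nx^{n+1}$, putting $N=n+1$ (so $N-k=n+1-k$ and $\tfrac{(N+j-1)!}{N!}=\tfrac{(n+j)!}{(n+1)!}$) produces exactly \eqref{reciprocalcoeff}.

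The work is bookkeeping rather than conceptual, and the points needing care are: tracking the rescaling by $p^\ell$ throughout, so that $\Psi$ is the compositional inverse of $t\mapsto tE(p^\ell t)$ and $\varphi(w)=1/E(p^\ell w)$; normalizing the Bell-polynomial arguments as $x_m=m!\,e_m$ so that the polynomials $B_{k,j}(1!e_1,2!e_2,\dots)$ in \eqref{reciprocalcoeff} match those in \eqref{Acoeff}; separating the $k=0$ term of the expansion, which yields the constant $1$ in $t_n$; and carrying out the shift $N=n+1$, after which the outer sum runs to $k=n$ with the nonvanishing weight $n+1-k$. I expect the only mild subtlety to be confirming that this coefficient rearrangement is precisely the one in \eqref{reciprocalcoeff}; a direct check at small $n$ (e.g.\ $t_1=1-p^\ell e_1$) pins down the normalization.
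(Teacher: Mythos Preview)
Your argument is correct and takes a genuinely different route from the paper's. The paper first expands $\hat A(x)^{-1}$ using Comtet's Theorem~B (the formula for the reciprocal of a power series) and then manipulates the resulting Bell polynomials in the variables $a_j$; only at the very end does it convert from the $a_j$'s back to the $e_j$'s by invoking an external convolution identity (Theorem~15 of \cite{BGW2012b}). Your approach bypasses the $a_j$'s entirely: by recognizing that $\hat A(x)=1-\Psi(x)$ with $\Psi$ the compositional inverse of $t\mapsto tE(p^\ell t)$, you can apply the Lagrange--B\"urmann formula to $H(w)=1/(1-w)$ and land directly on an expression involving $E(p^\ell w)^{-N}$, hence the $e_j$'s. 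This is shorter and more conceptual, and it avoids the dependence on \cite{BGW2012b}; the price is that one must know the form of Lagrange inversion that gives $[x^N]H(w)=\tfrac{1}{N}[w^{N-1}]H'(w)\varphi(w)^N$, which is slightly beyond the version stated as Lemma~\ref{FormalInversion} in the paper. The bookkeeping you flag (the $p^\ell$ rescaling, the $x_m=m!e_m$ normalization via homogeneity, and the shift $N=n+1$) is all handled correctly.
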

\begin{proof}
By Theorem~B in \cite[Section~3.5]{Comtet}, and using basic properties of partial Bell polynomials, we have
\begin{align*}
\hat A(x)^{-1} 
 &=1+x+\sum_{n=2}^\infty \sum_{k=1}^n \frac{k!}{n!} B_{n,k}(1,2!a_1p^\ell,3!a_2p^{2\ell},\dots)\, x^n \\
 &=1+x+\sum_{n=2}^\infty \sum_{k=1}^n k! \biggl(\sum_{{j}=0}^k\frac{n!}{(n-k)!{j}!}
   B_{n-k,k-{j}}(1! p^{\ell} a_1,2!p^{2\ell}a_2,\dots) \biggr) \frac{x^n}{n!}\\
 &=1+x+\sum_{n=1}^\infty \left(1+\sum_{k=1}^{n} \biggl(\sum_{{j}=1}^{n+1-k}\frac{(n+1-k)!}{k!(n+1-k-{j})!}
   p^{\ell k}B_{k,{j}}(1!a_1,2!a_2,\dots) \biggr) \right) x^{n+1}\\
 &=1+x+x\sum_{n=1}^\infty \left(1+\sum_{k=1}^{n} \frac{p^{\ell k}}{k!}
   \biggl(\sum_{{j}=1}^{k}\frac{(n+1-k)!}{(n+1-k-{j})!} B_{k,{j}}(1!a_1,2!a_2,\dots) \biggr) \right) x^n.
\end{align*}
In the last step, we declare the interior sum to be zero if ${j}> \min(k,n+1-k)$. Thus
\begin{align*}
t_n &=1+\sum_{k=1}^{n} \frac{p^{\ell k}}{k!}\sum_{{j}=1}^{k}\frac{(n+1-k)!}{(n+1-k-{j})!} 
  B_{k,{j}}(1!a_1,2!a_2,\dots) \\
 &=1+\sum_{k=1}^{n} p^{\ell k}\,\frac{n+1-k}{k!}\sum_{{j}=1}^{k}\binom{n-k}{{j}-1}({j}-1)! 
  B_{k,{j}}(1!a_1,2!a_2,\dots)
\end{align*}
Now, if we write $k!a_k$ as
\begin{equation*}
 k!a_k = \sum_{{j}=1}^k \binom{k+{j}}{{j}-1}({j}-1)! B_{k,{j}}(-1!e_1,-2!e_2,\dots),
\end{equation*}
then by means of Theorem~15 in \cite{BGW2012b} we get 
\begin{align*}
\sum_{{j}=1}^{k}\binom{n-k}{{j}-1}({j}-1)! & B_{k,{j}}(1!a_1,2!a_2,\dots) \\
 &= \sum_{{j}=1}^{k} \binom{n+{j}}{{j}-1}({j}-1)! B_{k,{j}}(-1!e_1,-2! e_2,\dots) \\
 &= \sum_{{j}=1}^{k}(-1)^{j} \frac{(n+{j})!}{(n+1)!} B_{k,{j}}(1!e_1,2! e_2,\dots).
\end{align*}
In other words, $t_n$ has the form claimed in \eqref{reciprocalcoeff}.
\end{proof}

Now, motivated by \eqref{reciprocalcoeff}, for $n\ge 1$ we consider
\begin{equation*}
 T_n(x) = 1+ \sum_{k=1}^{\infty}  \frac{n+1-k}{k!}
 \bigg(\sum_{{j}=1}^{k}(-1)^{j} \frac{(n+{j})!}{(n+1)!} B_{k,{j}}(1!e_1,2! e_2,\dots)\bigg) x^k.
\end{equation*}

\begin{lemma} 
With $E(x)$ as in \eqref{Eseries}, we have
\begin{equation*}
 T_{n}(x) = E(x)^{-n-2}\big(E(x)+xE'(x)\big).
\end{equation*}
\end{lemma}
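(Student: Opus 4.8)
The plan is to recognize $T_n(x)$ as a rescaled generating function built from the Bell-polynomial data of $E(x)$, and then to identify the resulting series in closed form. First I would relate the inner sum
\[
 \sigma_k(n) := \sum_{j=1}^{k}(-1)^{j}\frac{(n+j)!}{(n+1)!}B_{k,j}(1!e_1,2!e_2,\dots)
\]
to the coefficients $\beta_k$ of the formal inverse $\phi^{-1}$ appearing in Lemma~\ref{FormalInversion}. Indeed, since $\phi(x)=xE(x)$, Lemma~\ref{FormalInversion} gives $\phi^{-1}(u)=u(1+\sum_{k\ge1}\beta_k u^k/k!)$ with $\beta_k=\sum_{j}(-1)^j\frac{(k+j)!}{(k+1)!}B_{k,j}(1!e_1,2!e_2,\dots)$. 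So the case $n=k$ of $\sigma_k$ recovers $\beta_k/k!$, but for general $n$ the factor $\frac{(n+j)!}{(n+1)!}$ is the "shifted" version. The natural way to produce such a shift uniformly in $k$ is to use the identity (a standard consequence of Lagrange inversion, cf. the derivation in Proposition~\ref{RootThm} and Theorem~15 of \cite{BGW2012b}) that expresses $\sum_k \sigma_k(n)x^k$ as the coefficient extraction $[\,\cdot\,]$ of a power of $\phi^{-1}{}'$ or equivalently of $E$. Concretely, I expect
\[
 \sum_{k=0}^{\infty}\frac{\sigma_k(n)}{k!}\,x^k \;=\; \text{(a fixed negative power of $E$)} ,
\]
with the power depending affinely on $n$; the $\frac{n+1-k}{k!}$ weight in $T_n$ then converts this into a combination of $E^{-n-c}$ and its $x$-derivative.

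The cleanest route is probably to differentiate. Write $F_n(x):=\sum_{k\ge0}\frac{\sigma_k(n)}{k!}x^k$ so that $T_n(x)=(n+1)F_n(x)-xF_n'(x)$, after checking that the $k=0$ term contributes the constant $1$ and that the weight $n+1-k$ matches $(n+1)-x\frac{d}{dx}$ acting coefficientwise. Then it remains to show $F_n(x)=E(x)^{-n-1}$ (or whatever the correct exponent turns out to be — this should be pinned down by testing $k=1$, where $\sigma_1(n)=-(n+1)e_1$ forces $F_n=E^{-n-1}$ since $[x^1]E^{-n-1}=-(n+1)e_1$). Granting $F_n=E^{-n-1}$, one computes
\[
 T_n(x)=(n+1)E^{-n-1}-x\cdot(-(n+1))E^{-n-2}E' = (n+1)E^{-n-2}\big(E+xE'\big)\cdot\tfrac{1}{n+1}\cdot(n+1),
\]
wait — more carefully, $(n+1)E^{-n-1}+ (n+1)xE^{-n-2}E' = (n+1)E^{-n-2}(E+xE')$, which is off from the claim by a factor $n+1$; so in fact the correct normalization must be $F_n=\frac{1}{n+1}E^{-n-1}$, i.e. $\sigma_k(n)/k!$ already carries the $\frac{1}{n+1}$, which it does through the $\frac{(n+j)!}{(n+1)!}$ factor. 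I would recheck the $k=1$ case with that normalization and then the displayed derivative computation yields exactly $T_n(x)=E(x)^{-n-2}(E(x)+xE'(x))$.

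The key identity to establish rigorously is therefore $\displaystyle\sum_{k\ge1}\frac{1}{k!}\Big(\sum_{j=1}^k(-1)^j\tfrac{(n+j)!}{(n+1)!}B_{k,j}(1!e_1,2!e_2,\dots)\Big)x^k = \tfrac{1}{n+1}\big(E(x)^{-n-1}-1\big)$. I would prove this by applying Lemma~\ref{FormalInversion} together with the ``power of the inverse'' form of Lagrange inversion: if $\phi(x)=xE(x)$ then for any $s$, $[x^k]\big(\phi^{-1}(x)/x\big)^{s}$ (equivalently $[u^{k}]\,u^{s}\phi^{-1}$-type expansions) produces partial Bell polynomials in $(1!e_1,2!e_2,\dots)$ with precisely the falling/rising factorial prefactors $\frac{(n+j)!}{(n+1)!}$ when $s=-n-1$; this is exactly the mechanism already used to pass from \eqref{altForm} to \eqref{formalRoot} and codified in Theorem~15 of \cite{BGW2012b}. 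Alternatively, and perhaps more transparently, one can bypass Bell polynomials entirely: set $u=\phi(x)=xE(x)$, so $x=\phi^{-1}(u)$ and $E(x)=u/x=u/\phi^{-1}(u)$; then $E(x)^{-n-1}=(\phi^{-1}(u)/u)^{n+1}$, and Lagrange inversion gives its coefficients in $u$ directly, which after re-expressing in terms of $x$ (not $u$) via the very series $E$ recovers the Bell-polynomial sum. The main obstacle is bookkeeping: matching the index shifts ($B_{k,j}$ versus $B_{n+j,j}$, the $\frac{(n+j)!}{(n+1)!}$ versus $\frac{(k+j)!}{(k+1)!}$ factors) and the sign conventions so that the weight $n+1-k$ cleanly assembles into $(n+1)-x\frac{d}{dx}$; once that is done the closed form $E^{-n-2}(E+xE')$ falls out immediately, and one notes $E+xE'=(xE)'=\phi'$, so equivalently $T_n=\phi' E^{-n-2}$, which is the form most convenient for the subsequent computation of the $b_n$.
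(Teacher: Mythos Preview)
Your approach is essentially the paper's: split the weight $n+1-k$ as the operator $(n+1)-x\frac{d}{dx}$ acting coefficientwise, identify the underlying generating series as a negative power of $E$, and differentiate. The paper avoids your normalization tangle by setting $\tau_k=\frac{1}{k!}\sum_j(-1)^j\frac{(n+j)!}{n!}B_{k,j}$ (with $n!$ rather than $(n+1)!$), so that $T_n=(1+\sum_k\tau_k x^k)-\frac{x}{n+1}\frac{d}{dx}(1+\sum_k\tau_k x^k)$ on the nose; and the key identity $1+\sum_k\tau_k x^k=E^{-n-1}$ is immediate from Fa\`a di Bruno (it is literally the statement $[x^k]\,(1+y)^{-n-1}\big|_{y=E(x)-1}=\tau_k$), so the detour through Lagrange inversion you sketch is not needed.
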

\begin{proof}
Fix $n\ge 1$ and denote
\begin{equation*}
 \tau_k = \frac{1}{k!} \sum_{{j}=1}^{k}(-1)^{j} \frac{(n+{j})!}{n!} B_{k,{j}}(1!e_1,2! e_2,\dots).
\end{equation*}
Then
\begin{equation*}
 T_n(x) = 1+ \sum_{k=1}^{\infty}  \Big(1-\frac{k}{n+1}\Big)\tau_k x^k 
 = 1+ \sum_{k=1}^{\infty} \tau_k x^k  - \frac{1}{n+1}\sum_{k=1}^{\infty}k\tau_k x^k.
\end{equation*}
It is easy to check that $1+\sum_{k=1}^{\infty} \tau_k x^k=E(x)^{-n-1}$. Therefore,
\begin{align*}
 T_n(x) &=E(x)^{-n-1} - \frac{1}{n+1} x \frac{d}{dx}\Big(E(x)^{-n-1}\Big) \\
 &= E(x)^{-n-1} + xE(x)^{-n-2}E'(x) = E(x)^{-n-2}\big(E(x)+xE'(x)\big).
\end{align*}
\end{proof}

As a direct consequence of this lemma we get the recurrence relation
\begin{equation*}
 T_{n-1}(x) = E(x)T_{n}(x),
\end{equation*}
which can be used to define $T_0(x)$ and $T_{-n}(x)$ for $n\ge 1$. More precisely, we let
\begin{equation*}
 T_0(x)=E(x)T_1(x) \;\text{ and }\;
 T_{-n}(x) = E(x)^{n+1} T_1(x) \text{ for } n\ge 1.
\end{equation*}

Given that 
\begin{equation}\label{eq:fhat}
 \hat f(x) = p^{-2\ell}f(p^\ell x) = p^{w-2\ell}+p^{m-\ell}\gamma_1 x + \sum_{n=2}^d p^{\ell(n-2)}\gamma_n x^n,
\end{equation}
the relation $T_{n-j}(x) = E(x)^jT_{n}(x)$ gives
\begin{equation}\label{polyIdentity}
 p^{w-2\ell}T_n(x) + p^{m-\ell}\gamma_1 T_{n-1}(x) + \sum_{j=2}^d p^{\ell(j-2)}\gamma_j T_{n-j}(x) = \hat f(E(x)) T_n(x).
\end{equation}
Moreover, since $E(x)$ is a unit in $\Z[[x]]$, for every $\nu\in\Z$ the function $T_\nu(x)$ is in $\Z[[x]]$ and so $T_\nu(p^\ell)\in\Z_p$. 
 
\begin{lemma}\label{TnCongruences}
For $\nu\ge -1$, the $p$-adic numbers $T_\nu(p^\ell)$ satisfy
\begin{equation*}
 T_\nu(p^\ell)-t_\nu \equiv 0 \!\pmod{p^{\ell(\nu+2)}}
\end{equation*}
with $t_\nu$ as in \eqref{reciprocalcoeff} for $\nu>0$ and $t_0=t_{-1}=1$.
\end{lemma}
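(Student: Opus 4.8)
The plan is to compare $T_\nu(p^\ell)$ with $t_\nu$ coefficient by coefficient, using the explicit series for $T_\nu(x)$ together with the divisibility $p^{\ell k}\mid p^{\ell k}$ that is already built into the formula \eqref{reciprocalcoeff} for $t_\nu$. For $\nu>0$, recall that
\begin{equation*}
 T_\nu(x) = 1+ \sum_{k=1}^{\infty}  \frac{\nu+1-k}{k!}
 \bigg(\sum_{{j}=1}^{k}(-1)^{j} \frac{(\nu+{j})!}{(\nu+1)!} B_{k,{j}}(1!e_1,2! e_2,\dots)\bigg) x^k,
\end{equation*}
so that evaluating at $x=p^\ell$ and truncating at $k=\nu$ gives exactly $t_\nu$ as written in \eqref{reciprocalcoeff}; the discrepancy $T_\nu(p^\ell)-t_\nu$ is therefore the tail $\sum_{k>\nu}(\cdots)p^{\ell k}$. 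Each term of that tail carries a factor $p^{\ell k}$ with $k\ge \nu+1$, and since the $e_j$ are ordinary integers the remaining factor $\frac{\nu+1-k}{k!}\sum_j(-1)^j\frac{(\nu+j)!}{(\nu+1)!}B_{k,j}(1!e_1,2!e_2,\dots)$ is a $p$-adic integer (indeed a rational integer, as the proof of the preceding proposition shows, being a value of the integer series $t_\nu$-type coefficient). Hence the tail lies in $p^{\ell(\nu+1)}\Z_p$. This, however, only gives divisibility by $p^{\ell(\nu+1)}$, and we want $p^{\ell(\nu+2)}$.

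To gain the extra power of $p$ I would examine the $k=\nu+1$ term separately, since all $k\ge \nu+2$ terms already lie in $p^{\ell(\nu+2)}\Z_p$. The $k=\nu+1$ term of $T_\nu(x)$ has coefficient $\frac{\nu+1-(\nu+1)}{(\nu+1)!}(\cdots)=0$: the factor $\nu+1-k$ vanishes identically at $k=\nu+1$. Thus the first potentially surviving term in the tail is actually $k=\nu+2$, which contributes $p^{\ell(\nu+2)}\cdot(\text{$p$-adic integer})$, and we conclude $T_\nu(p^\ell)\equiv t_\nu \pmod{p^{\ell(\nu+2)}}$ for $\nu\ge 1$. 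For $\nu=0$ and $\nu=-1$ the series $T_\nu$ are defined through the recurrence $T_{n-1}(x)=E(x)T_n(x)$, i.e. $T_0(x)=E(x)T_1(x)$ and $T_{-1}(x)=E(x)^2T_1(x)$; since $E(x)=1+e_1x+\cdots$ with the $e_j\in\Z$, we have $E(p^\ell)\equiv 1\pmod{p^\ell}$, and multiplying the congruence $T_1(p^\ell)\equiv t_1\pmod{p^{3\ell}}$ by $E(p^\ell)$ or $E(p^\ell)^2$ — then tracking the constant term, which is $T_\nu(0)=1=t_\nu$ by the convention $t_0=t_{-1}=1$ — yields $T_0(p^\ell)\equiv 1\pmod{p^{2\ell}}$ and $T_{-1}(p^\ell)\equiv 1\pmod{p^{\ell}}$, matching the exponents $\ell(\nu+2)$ for $\nu=0,-1$.

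The main obstacle is the bookkeeping around the vanishing of the $k=\nu+1$ term: one must be sure that it is exactly the factor $\nu+1-k$ (and not some cancellation among the Bell polynomials) that produces the gain, and that no other term of index $k\le\nu$ has been mis-accounted when passing from the finite sum in \eqref{reciprocalcoeff} to the full series $T_\nu$. A secondary point to check carefully is that the per-term factors really are $p$-adic integers for all $k$, not just $k\le\nu$; this follows because $\frac{1}{k!}B_{k,j}(1!e_1,2!e_2,\dots)\cdot(\nu+j)!/(\nu+1)!$ is, up to sign and the integer factor $\nu+1-k$, precisely the kind of expression shown to be an integer in the proof of the proposition on $\hat A(x)^{-1}$, via Theorem~15 of \cite{BGW2012b}. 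Once these two points are nailed down, the congruences for $\nu\ge1$ are immediate and the cases $\nu=0,-1$ follow by the multiplicativity of the recurrence as above.
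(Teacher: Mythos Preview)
Your approach is essentially the paper's: for $\nu\ge1$ you note that $t_\nu$ is the partial sum of $T_\nu(p^\ell)$ through $k=\nu$, and the key gain comes from the vanishing factor $\nu+1-k$ at $k=\nu+1$, pushing the tail to $k\ge\nu+2$; for $\nu=0,-1$ you use the recurrence $T_{\nu-1}=E\,T_\nu$ together with low-order information about $E(p^\ell)$ and $T_1(p^\ell)$. That is exactly what the paper does.

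Two remarks on the obstacles you flag. First, the integrality of each coefficient of $T_\nu(x)$ need not be argued via Theorem~15 of \cite{BGW2012b}; the paper has already observed, just before the lemma, that $T_\nu(x)=E(x)^{-\nu-2}\bigl(E(x)+xE'(x)\bigr)\in\Z[[x]]$ because $E$ is a unit there, so every term of the tail is automatically an integer times $p^{\ell k}$. Second, your treatment of $\nu=0$ is a little under-specified: merely ``tracking the constant term'' gives only $T_0(p^\ell)\equiv1\pmod{p^{\ell}}$. You need the linear cancellation, namely $E(p^\ell)=1+e_1p^\ell+O(p^{2\ell})$ and $T_1(p^\ell)=1-e_1p^\ell+O(p^{2\ell})$, whose product is $1+O(p^{2\ell})$; this is how the paper handles it, and it is equivalent to noting that the $x$-coefficient of $T_0(x)=E(x)T_1(x)$ vanishes.
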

\begin{proof}
For $\nu=n\ge1$ the statement is a consequence of the fact that $t_n$ is the $n$-th partial sum of $T_n(p^\ell)$ and the coefficient of $x^{n+1}$ in $T_n(x)$ is zero. Further, given that
\begin{equation*}
E(p^\ell)=1+p^\ell e_1+O(p^{2\ell}) \;\text{ and }\; T_1(p^\ell)=1-p^\ell e_1 + O(p^{2\ell}),
\end{equation*}
we have
\begin{equation*}
T_0(p^\ell) = \big(1+p^\ell e_1+O(p^{2\ell})\big)\big(1-p^\ell e_1 + O(p^{2\ell})\big)\equiv 1 \!\pmod{p^{2\ell}}.
\end{equation*}
This implies $T_0(p^\ell)-t_0\equiv 0 \!\pmod{p^{2\ell}}$.

Finally, since $T_{-1}(p^\ell)=E(p^\ell)^{2}T_1(p^\ell)$, and because $E(p^\ell)^{2}$ and $T_1(p^\ell)$ are both of the form $1+O(p^\ell)$, we get  $T_{-1}(p^\ell)\equiv 1 \!\pmod{p^{\ell}}$, hence $T_{-1}(p^\ell) - t_{-1}\equiv 0 \!\pmod{p^{\ell}}$.
\end{proof}

Using $\hat f(x)$ as in \eqref{eq:fhat}, we now define
\begin{align*}
 \hat B(x) = \hat f(x)\hat A(x)^{-1}
 = \Big(p^{w-2\ell}+p^{m-\ell}\gamma_1 x + \sum_{n=2}^d p^{\ell(n-2)}\gamma_n x^n\Big)
 \Big(1+ x + x\sum_{n=1}^\infty t_n x^n\Big),
\end{align*}
and write it as
\begin{equation*}
 \hat B(x) = p^{w-2\ell}+(p^{w-2\ell}+p^{m-\ell}\gamma_1) x + x\sum_{n=1}^\infty \hat b_n x^n
\end{equation*}
with
\begin{equation} \label{Bhat_coeff}
 \hat b_n = p^{w-2\ell} t_n + p^{m-\ell}\gamma_1 t_{n-1} +\sum_{j=2}^d p^{\ell(j-2)}\gamma_j t_{n-j} \in\Z,
\end{equation}
where $t_n$ is given by \eqref{reciprocalcoeff}, $t_0=t_{-1}=1$, and $t_{-n}=0$ for $n>1$.

\begin{lemma} \label{bhat_divisibility}
The coefficients $\hat b_n$ are divisible by $p^{\ell n}$.
\end{lemma}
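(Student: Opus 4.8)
The plan is to show that the $p$-adic number $\hat B(p^\ell) = \hat f(p^\ell)\,\hat A(p^\ell)^{-1}$ is divisible by a large power of $p$, and then to transfer that divisibility to the integer coefficients $\hat b_n$ via the congruences of Lemma~\ref{TnCongruences}. First I would observe that $r = p^\ell E(p^\ell) = \phi(p^\ell)$ is a root of $f$, so $\hat f(E(p^\ell)) = p^{-2\ell} f(p^\ell E(p^\ell)) = p^{-2\ell} f(r) = 0$. Plugging this into the polynomial identity \eqref{polyIdentity} with an arbitrary index $n$, and using that $T_n(p^\ell)$ is a unit (being of the form $1 + O(p^\ell)$), yields
\begin{equation*}
 p^{w-2\ell}T_n(p^\ell) + p^{m-\ell}\gamma_1 T_{n-1}(p^\ell) + \sum_{j=2}^d p^{\ell(j-2)}\gamma_j T_{n-j}(p^\ell) = 0
\end{equation*}
in $\Z_p$, for every $n$.

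Next I would compare this vanishing relation with the definition \eqref{Bhat_coeff} of $\hat b_n$. Subtracting, we get
\begin{equation*}
 \hat b_n = p^{w-2\ell}\big(t_n - T_n(p^\ell)\big) + p^{m-\ell}\gamma_1\big(t_{n-1} - T_{n-1}(p^\ell)\big) + \sum_{j=2}^d p^{\ell(j-2)}\gamma_j\big(t_{n-j} - T_{n-j}(p^\ell)\big).
\end{equation*}
Now I would bound the $p$-adic valuation of each term using Lemma~\ref{TnCongruences}: for an index $\nu = n - j$ with $\nu \ge -1$ we have $v_p\big(t_{n-j} - T_{n-j}(p^\ell)\big) \ge \ell(n-j+2)$, so the $j$-th term in the sum (for $2 \le j \le d$) has valuation at least $\ell(j-2) + \ell(n-j+2) = \ell n$; the $j=1$ term has valuation at least $(m-\ell) + \ell(n+1) \ge \ell n$ since $m \ge 1 > 0 \ge \ell - \ell$, actually more simply $m - \ell + \ell(n+1) = m + \ell n \ge \ell n$; and the $j=0$ term has valuation at least $(w-2\ell) + \ell(n+2) = w + \ell n \ge \ell n$. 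The one case needing care is when $n - j < -1$, i.e. when $t_{n-j}$ and $T_{n-j}(p^\ell)$ are both declared to be $0$ (for $n-j \le -2$); there the difference is exactly $0$ and contributes nothing, so those terms may simply be dropped. Since every surviving term has $p$-adic valuation at least $\ell n$, we conclude $v_p(\hat b_n) \ge \ell n$, i.e. $p^{\ell n}\mid \hat b_n$.

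The main obstacle, I expect, is bookkeeping the edge cases around small indices $\nu = n-j$: one must check that Lemma~\ref{TnCongruences} is invoked only for $\nu \ge -1$ (where the stated congruence holds, with the conventions $t_0 = t_{-1} = 1$), and separately handle $\nu \le -2$ where both quantities vanish by the convention $t_{-n}=0$ for $n>1$ and the definition $T_{-n}(x) = E(x)^{n+1}T_1(x)$ — wait, $T_{-n}(p^\ell)$ is \emph{not} zero, so for $n - j \le -2$ one instead needs that $\gamma_j$-term to not appear, which it doesn't because the sum in \eqref{Bhat_coeff} only runs $2 \le j \le d$ and pairs it with $t_{n-j}$, and for $n < j-1$ we are using $t_{n-j}=0$; the contribution is then $p^{\ell(j-2)}\gamma_j\big(0 - T_{n-j}(p^\ell)\big)$, which is a genuine nonzero term. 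So the real subtlety is that $\hat b_n$ as an \emph{integer} is computed from the truncated $t$'s, whereas the vanishing identity uses the full $T_\nu(p^\ell)$; the discrepancy for those low-index terms must be absorbed. I would resolve this by noting that for $n - j \le -2$ we have $j \ge n+2$, so $p^{\ell(j-2)} \gamma_j T_{n-j}(p^\ell)$ already has valuation $\ge \ell(j-2) \ge \ell n$, which is exactly what is needed; thus in all cases the bound $v_p(\hat b_n) \ge \ell n$ survives, completing the proof.
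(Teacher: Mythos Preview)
Your proposal is correct and follows essentially the same approach as the paper: subtract the vanishing identity \eqref{polyIdentity} evaluated at $x=p^\ell$ from the definition of $\hat b_n$, then bound each term using Lemma~\ref{TnCongruences}. The paper organizes the argument by splitting into the cases $n\ge d-1$ (where all indices $n-j\ge -1$ and Lemma~\ref{TnCongruences} applies directly) and $1\le n<d-1$ (where the low-index tail $\sum_{j=n+2}^d p^{\ell(j-2)}\gamma_j T_{n-j}(p^\ell)$ is handled by the observation $j-2\ge n$), whereas you handle both ranges in a single pass term-by-term; but the substance is the same.
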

\begin{proof}
First of all, since $p^{-\ell}r=E(p^\ell)$ is a $p$-adic root of $\hat f$, identity \eqref{polyIdentity} implies
\begin{equation*}
 p^{w-2\ell}T_n(p^\ell) + p^{m-\ell}\gamma_1 T_{n-1}(p^\ell) + \sum_{j=2}^d p^{\ell(j-2)}\gamma_j T_{n-j}(p^\ell) = 0 \text{ in } \Z_p.
\end{equation*}
Therefore, for $n\ge d-1$,
\begin{align*}
 \hat b_n &= p^{w-2\ell}t_n + p^{m-\ell}\gamma_1 t_{n-1} +\sum_{j=2}^d p^{\ell(j-2)}\gamma_j t_{n-j} \\
 &= p^{w-2\ell}\big(t_n-T_n(p^\ell)\big) + p^{m-\ell}\gamma_1 \big(t_{n-1}-T_{n-1}(p^\ell)\big) +
 \sum_{j=2}^d p^{\ell(j-2)}\gamma_j \big(t_{n-j}-T_{n-j}(p^\ell)\big),
\end{align*}
which by Lemma~\ref{TnCongruences} is congruent to $0$ mod $p^{\ell n}$. Similarly, for $1\le n <d-1$,
\begin{align*}
 \hat b_n &= p^{w-2\ell}t_n + p^{m-\ell}\gamma_1 t_{n-1} +\sum_{j=2}^{n+1} p^{\ell(j-2)}\gamma_j t_{n-j} \\
 &\equiv - \sum_{j=n+2}^d p^{\ell(j-2)}\gamma_j T_{n-j}(p^\ell) \equiv 0 \!\pmod{p^{\ell n}}.
\end{align*}
\end{proof}

\smallskip
Finally, defining $B(x)=p^{\ell}\hat B(x/p^\ell)$, we arrive at the factorization $f(x) = A(x)B(x)$.

\begin{remark}
It is worth mentioning that our method for factorization in $\Z[[x]]$ is not restricted to polynomials and can be applied to power series. As an example, consider 
\begin{equation*}
f(x) =  9+12x+7x^2+8x^3\sum_{k=0}^\infty x^k = 9+12x+7x^2+\frac{8x^3}{1-x},
\end{equation*}
discussed by B\'ezivin in \cite{Bezivin}. This series is reducible in $\Z[[x]]$ and factors as 
\begin{equation*}
  f(x)=\dfrac{(3-x)^2(1+x)}{1-x}.
\end{equation*}
The reader is invited to confirm that the power series version of Theorem~\ref{FactorizationThm} gives the factorization $f(x)=A(x)B(x)$ with $A(x)=3-x$ and $B(x)=\frac{(3-x)(1+x)}{1-x}$. 

An interesting feature of this example is that the partial sums $f_d(x) = 9+12x+7x^2+\cdots$ of $f(x)$ of degree $d\ge 2$ are all irreducible in $\Z[[x]]$. This fact was proved in \cite[Prop.~8.1]{Bezivin}, but it can also be derived from Proposition~3.4 of \cite{BGW2012b} together with the observation that for $d\ge 2$, the polynomial $f_d(x)$ has no roots in $p\Z_p$.
\end{remark}

\section*{Appendix: Some properties of Bell polynomials}\label{sec:appendix}
\setcounter{section}{1}
\setcounter{equation}{0}
\renewcommand{\thesection}{\Alph{section}}

Throughout this paper, we make extensive use of the well-known partial Bell polynomials. For any sequence $x_1,x_2,\dots$, the $(n,k)$-th partial Bell polynomial is defined by
\begin{equation*}
B_{n,k}(x) =\sum_{i\in\pi(n,k)}\frac{n!}{i_1!i_2!\cdots}\left(\frac{x_1}{1!}\right)^{i_1}\left(\frac{x_2}{2!}\right)^{i_2}\cdots,
\end{equation*}
where $\pi(n,k)$ is the set of all sequences $i=(i_1,i_2,\dots)$ of nonnegative integers such that
\begin{equation*}
i_1+i_2+\dots=k \;\text{ and }\; i_1+2i_2+3i_3+\dots=n.
\end{equation*}
Clearly, these polynomials satisfy the homogeneity relation 
\begin{equation*}
B_{n,k}(abx_1,ab^2x_2,ab^3x_3,\dots) = a^kb^n B_{n,k}(x_1,x_2,x_3,\dots).
\end{equation*}

Here are other elementary identities (cf. \cite[Section 3.3]{Comtet}) needed in this paper: 
\begin{gather} 
\label{Comt3l'}
B_{n,k}(\tfrac{x_2}{2},\tfrac{x_3}{3},\dots) = \frac{n!}{(n+k)!} B_{n+k,k}(0,x_2,x_3,\dots), \\
\label{Comt3n}
B_{n,k}(x_1+x_1',x_2+x_2',\dots) = \sum_{\substack{\kappa\le k \\ \nu\le n}} 
\binom{n}{\nu}B_{\nu,\kappa}(x_1,x_2,\dots) B_{n-\nu,k-\kappa}(x_1',x_2',\dots), \\
\label{Comt3n'}
B_{n,k}(0,\dots,0,x_j,0,\dots) = 0, \text{ except } B_{jk,k}=\frac{(jk)!}{k!(j!)^k}\, x_j^k.
\end{gather}
Also of special interest is the identity
\begin{equation}\label{WangEx3.2}
B_{n,k}((a)_1,(a)_2,\dots) = \frac{1}{k!} \sum_{j=0}^k (-1)^{k-j} \binom{k}{j} (ja)_{n},
\end{equation}
where $(a)_n=a(a-1)\cdots(a-n+1)$. This is a special case of \cite[Example~3.2]{WW09}.

For more on Bell polynomials and their applications, see e.g. \cite{Bell,Charalambides,Comtet,WW09}.



\begin{thebibliography}{99}
\bibitem{Bell} 
E.T.~Bell, \emph{Exponential polynomials}, Ann. of Math., \textbf{35} (1934), pp. 258--277.
%
\bibitem{Bezivin} 
J.-P.~B{\'e}zivin, \emph{Factorisation de polyn\^{o}mes}, unpublished manuscript (in French), 2008, available at http://jp.bezivin.pagesperso-orange.fr/
%
\bibitem{Birk27}
R.~Birkeland, \emph{\"Uber die Aufl\"osung algebraischer Gleichungen durch hypergeometrische Funktionen}, Math. Z. \textbf{26} (1927), no. 1, 566--578. 
%
\bibitem{BiGi}
D.~Birmajer and J.B.~Gil, \emph{Arithmetic in the ring of formal power series with integer coefficients}, Amer. Math. Monthly \textbf{115} (2008), no. 6, 541--549. 
%
\bibitem{BGW2012a}
D.~Birmajer, J.B.~Gil, and M.D.~Weiner, \emph{Factoring polynomials in the ring of formal power series over $\Z$}, Int. J. Number Theory \textbf{8} (2012), no. 7, 1763--1776.
%
\bibitem{BGW2012b}
D.~Birmajer, J.B.~Gil, and M.D.~Weiner, \emph{Some convolution identities and an inverse relation involving partial Bell polynomials}, Electron. J. Combin. \textbf{19} (2012), no. 4, Paper 34, 14 pp.
%
\bibitem{Charalambides} 
C.A.~Charalambides, \emph{Enumerative Combinatorics}, Chapman and Hall/CRC, Boca Raton, 2002.
%
\bibitem{Comtet} 
L.~Comtet, \emph{Advanced Combinatorics: The Art of Finite and Infinite Expansions}, D. Reidel Publishing Co., Dordrecht, 1974.
%
\bibitem{Elliott}
J.~Elliott, \emph{Factoring formal power series over principal ideal domains}, preprint, arXiv:1107.4860v4 [math.AC], 2011.
%
\bibitem{Katok}
S.~Katok, \emph{$p$-adic Analysis Compared with Real}, Mathematics Advanced Study Semesters
(Student Mathematical Library, 37), American Mathematical Society, Providence, RI, 2007. 
%
\bibitem{Koblitz}
N.~Koblitz, \emph{$p$-adic Numbers, $p$-adic Analysis, and Zeta-Functions}, 2nd edition, Graduate Texts in Mathematics 58, Springer-Verlag, New York, 1984.
%
\bibitem{Lambert}
J.H.~Lambert, \emph{Observationes variae in Mathesin puram.}, Acta Helvetica \textbf{3} (1758), 128--168.
%
\bibitem{PaTsi}
M.~Passare and A.~Tsikh, \emph{Algebraic equations and hypergeometric series}, in The legacy of Niels Henrik Abel, 653--672, Springer, Berlin, 2004. 
%
\bibitem{Serre}
J.-P.~Serre, \emph{A Course in Arithmetic}, Graduate Texts in  Mathematics, no. 7, Springer Verlag, New York-Heidelberg, 1973.
%
\bibitem{Stillwell} 
J.~Stillwell, \emph{Eisenstein's footnote}, Math. Intelligencer \textbf{17} (1995), no. 2, 58--62. 
%
\bibitem{WW09} 
W.~Wang and T.~Wang, \emph{General identities on Bell polynomials}, Comput. Math. Appl. \textbf{58} (2009), no. 1, 104--118.
\end{thebibliography}
\end{document}